\documentclass[11pt,twoside]{article}
\usepackage[T1]{fontenc}
\usepackage[hmargin=1.25in,vmargin=1.25in, a4paper, centering]{geometry}
\usepackage[nottoc]{tocbibind}

\usepackage[biblabel,nosort,nocompress]{cite}

\usepackage{graphicx}
\usepackage{amsmath,amsthm}
\usepackage{amssymb}
\usepackage{upgreek}
\usepackage{biolinum}
\usepackage{garamondlibre}
\usepackage[notext,noDcommand,partialup]{kpfonts}
\usepackage[cal=stixplain,frak=euler]{mathalpha}
\usepackage{courierten}

\usepackage{physics}
\usepackage{bbm}
\usepackage{graphicx}
\usepackage{mathtools,slashed}
\usepackage{cases}
\usepackage{xfrac}

\usepackage[dvipsnames]{xcolor}
\usepackage[colorlinks=true, pdfstartview=FitV, linkcolor=RoyalBlue,citecolor=ForestGreen, urlcolor=BurntOrange]{hyperref}

\usepackage{tikz}
\usetikzlibrary{hobby}
\usepackage{fancyhdr}
\renewcommand\footnotemark{}
\usepackage[sf,sl,outermarks]{titlesec}
\titleformat{\section}
{\Large\bfseries\sffamily}{\filcenter\Large{\thesection.}}{1em}{\filcenter}
\titleformat{\subsection}
{\large\bfseries\sffamily}{\thesubsection.}{1em}{}
\titleformat{\subsubsection}[runin]
{\normalsize\bfseries\itshape\sffamily}{{\normalfont\bfseries\itshape\S}\thesubsubsection.}{0.5em}{}[.\hspace*{0.5ex}]
\titleformat{\paragraph}[runin]
{\normalsize\bfseries\itshape\sffamily}{{\normalfont\bfseries\itshape\S}\theparagraph.}{0.5em}{}[.\hspace*{0.5ex}]

\usepackage{abstract}

\theoremstyle{definition}
\newtheorem{defi}{\sffamily Definition}[section]

\theoremstyle{plain}
\newtheorem{theorem}[defi]{\sffamily Theorem}

\newtheorem{lemma}[defi]{\sffamily Lemma}

\makeatletter
\providecommand{\proofnamestyle}{\itshape\sffamily\bfseries}
\renewenvironment{proof}[1][\proofname]{\par
	\pushQED{\qed}%
	\normalfont \topsep6\p@\@plus6\p@\relax
	\trivlist
	\item\relax
	{\proofnamestyle
		#1\@addpunct{.}}\hspace\labelsep\ignorespaces
}{%
	\par \popQED\endtrivlist\@endpefalse
}
\makeatother

\numberwithin{equation}{section}

\newcommand*{\R}{\mathbb{R}}

\providecommand\given{} 
\newcommand\SetSymbol[1][]{\nonscript\:#1\vert \allowbreak \nonscript\: \mathopen{}}
\DeclarePairedDelimiterX\Set[1]\{\}{ \renewcommand\given{\SetSymbol[\delimsize]} #1 }

\newcommand*{\pd}{\partial}

\begin{document}
\title{\LARGE\bfseries\sffamily Local Well-posedness of Free Boundary Problems for the Euler--Monge--Amp\`ere Equations
\footnote{{\bf\sffamily Date: }\today.}
\footnote{{\bf\sffamily MSC(2020): }Primary 35R35, 35Q35; secondary 35J96, 49Q22.}%
\footnote{{\bf\sffamily Keywords: }free boundary; Euler--Monge--Amp\`ere; hydrodynamics; optimal transportation.}%
}
\author{
	\scshape Sicheng LIU
	\thanks{%
		\textbf{\sffamily Sicheng LIU: } School of Mathematics and Statistics, Ningbo University, Ningbo, 315211, Zhejiang, China.
		\textit{E-mail}: {\color{BurntOrange} \texttt{liusicheng@nbu.edu.cn}}
	}
	\and
	\scshape Tao LUO
	\thanks{%
		\textbf{\sffamily Tao LUO: }  Department of Mathematics, City University of Hong Kong, Kowloon, Hong Kong SAR, China.
		\textit{E-mail}: {\color{BurntOrange} \ttfamily taoluo@cityu.edu.hk}
	}
}
\date{}
\maketitle

\begin{abstract}
We study the free-boundary Euler--Monge--Ampère system as a conservative relaxation of ideal incompressible fluid motion. The force is determined on the actual fluid domain by a Monge--Ampère equation coupled to a nonlinear boundary condition. For each fixed relaxation parameter, we establish local well-posedness for uniformly convex initial domains of class $C^{2,\alpha}$, densities in $C^{1,\alpha}$, and arbitrary initial velocities in $C^{2,\alpha}$. The solution retains these spatial regularities, and the force potential belongs to $C^{3,\alpha}$ up to the moving boundary. Within uniformly bounded data families, the solution depends continuously on the initial data at every lower Hölder exponent.

\end{abstract}

{\sffamily\tableofcontents}
\addtocontents{toc}{\protect\setcounter{tocdepth}{2}}

\section{Introduction}\label{sec:introduction}

The free-boundary problem for the Euler--Monge--Ampère equations is studied in a bounded, moving domain $\Omega_t\subset\R^d$. The fluid is described by its density $\rho$ and velocity $\vb v$, while its acceleration is determined by a potential $q$ through a nonlinear elliptic problem on the current domain. For a fixed parameter $\delta>0$, the equations are given by
\begin{subequations}\label{EMA prob}
	\begin{equation}
		\begin{cases}
			\pd_t \rho + \div(\rho\vb v)=0,\\
			\pd_t \vb v + (\vb v\vdot\grad)\vb v = \grad q,\\
			\det(I + \delta^2\grad^2 q) = \rho,
		\end{cases}
		\qin \Omega_t,
		\label{EMA}
	\end{equation}
	together with the boundary conditions
	\begin{equation}
		\begin{cases}
			V_{\pd\Omega_t} = \vb v\vdot \vb n, \\[1ex]
			q + \dfrac{\delta^2}{2}\abs{\grad q}^2 = 0,
		\end{cases}
		\qq{on} \pd\Omega_t.
		\label{BC}
	\end{equation}
\end{subequations}
Here $\vb n$ is the unit outer normal to $\pd\Omega_t$, and $V_{\pd\Omega_t}$ is its outward normal speed. The first boundary condition states that the free surface is transported by the fluid. The second condition determines the boundary value of the force potential and fixes its additive constant. The Monge--Ampère equation is considered on the elliptic branch
\begin{equation*}
I+\delta^2\grad^2q>0 \qin\overline\Omega_t.
\end{equation*}
Neither the density nor the volume of the actual fluid domain is required to remain constant.

The system is motivated by a conservative relaxation of free-boundary incompressible Euler flow. Incompressibility is replaced by a restoring force, with coefficient $\delta^{-2}$, toward configurations of unit density. Optimal transportation relates the actual fluid configuration to an incompressible comparison configuration, whose image is not prescribed. The Monge--Ampère equation describes this mass-to-volume transformation. The boundary law in \eqref{BC} is the corresponding expression, in the actual fluid coordinates, of the zero boundary value of the normal potential on the comparison configuration. Thus, $q$ is not specified by a local equation of state: its gradient depends on both the density distribution and the shape of the entire fluid domain. The geometric motivation is given in \S\ref{sec:geometric-motivation}.

\subsection{Geometric Motivation}\label{sec:geometric-motivation}

The system \eqref{EMA prob} brings together two geometric descriptions of fluid motion: Arnold's interpretation of incompressible Euler flow as geodesic motion on a space of volume-preserving configurations \cite{Arn66,AK21}, and optimal transportation as a means of relaxing the incompressibility constraint \cite{Bre15}. In the fixed-domain setting, {\sc Brenier and Loeper} \cite[Sections~2.1--2.4]{BL04} used polar factorization to express a squared-distance penalization of this constraint through the Monge--Ampère equation. For a fluid with a free surface, {\sc Shatah and Zeng} \cite[Section~2.1]{SZ08} identified the corresponding configuration space and its normal bundle. The essential observation for the present model is that relaxing incompressibility must retain this free-boundary normal structure: it determines not only the force in the interior, but also the nonlinear boundary condition for $q$.

Let $\vb X(t,z)$ be the material flow map, with $\vb X(0,z)=z$ and $\pd_t\vb X=\vb v\circ\vb X$, and write $\rho_0=\rho(0,\cdot)$. Equip the space of configurations with the mass-weighted metric
\begin{equation*}
H\coloneqq L^2(\Omega_0,\rho_0\dd{z};\R^d),\qquad
\langle\vb W_1,\vb W_2\rangle_H
\coloneqq \int_{\Omega_0}\rho_0\vb W_1\vdot\vb W_2\dd{z}.
\end{equation*}
For the geometric discussion we work with smooth configurations. The incompressible configurations form the formal submanifold
\begin{equation*}
\mathscr M\coloneqq \Set*{
\vb Z\colon \overline\Omega_0\to\R^d\given 
\vb Z\text{ is an embedding},\quad \det\grad\vb Z=\rho_0
}.
\end{equation*}
Thus $\vb Z_{\#}(\rho_0\dd{z})=\mathbbm{1}_{U}\dd{y}$, where $U\coloneqq \vb Z(\Omega_0)$ and $\abs{U}=\int_{\Omega_0}\rho_0\dd{z}$. The weight $\rho_0$ simply allows the same material labels to describe both the compressible and incompressible configurations; when $\rho_0=1$, this is the usual space of volume-preserving embeddings. Crucially, the image $U$ is not prescribed.

Differentiating the Jacobian constraint shows that
\begin{equation*}
T_{\vb Z}\mathscr M
=\qty{\vb w\circ\vb Z\colon\, \div\vb w=0 \text{ in } U}.
\end{equation*}
Unlike the fixed-wall problem, there is no restriction on the normal component of $\vb w$ at the boundary. The normal space is therefore
\begin{equation}
\label{eq:normal-space}
N_{\vb Z}\mathscr M
=\qty{(\grad p)\circ\vb Z\colon p=0 \text{ on } \pd U}.
\end{equation}
Indeed, for every divergence-free $\vb w$,
\begin{equation*}
\braces{\langle}{\rangle}{(\grad p)\circ\vb Z,\vb w\circ\vb Z}_H
=\int_U\grad p\vdot\vb w\dd{y}
=\int_{\pd U}p\,\vb w\vdot\vb n_U\dd{S}=0,
\end{equation*}
where $\vb n_U$ is the outward unit normal to $\pd U$ (cf. \cite[\S~2.1]{SZ08} for more details). In the absence of surface tension and external forces, geodesic motion has acceleration in this normal space.

Following the penalization principle in \cite[\S~2.2]{BL04}, we replace the exact constraint by the potential energy
\begin{equation*}
\mathcal V_\delta(\vb X)
\coloneqq \frac{1}{2\delta^2}\operatorname{dist}_H^2(\vb X,\mathscr M)
=\frac{1}{2\delta^2}\inf_{\vb Z\in\mathscr M}
\int_{\Omega_0}\rho_0\abs{\vb X-\vb Z}^2\dd{z}.
\end{equation*}
Whenever a minimizing configuration $\vb Z=\vb Z[\vb X]$ is selected smoothly, the corresponding Newton equation and the first variation with respect to $\vb Z$ give
\begin{equation*}
\pd_{tt}\vb X=\frac{\vb Z-\vb X}{\delta^2},\qquad
\vb X-\vb Z\in N_{\vb Z}\mathscr M.
\end{equation*}
The coefficient $\delta^{-2}$ represents the stiffness of the restoring force toward incompressible configurations. This is a conservative relaxation, rather than a viscous regularization: on such a smooth branch, $\frac12\norm{\pd_t\vb X}_H^2+\mathcal V_\delta(\vb X)$ is a Hamiltonian. We use this smooth projection picture to motivate the equations, not to presume a globally defined smooth nearest-point projection onto $\mathscr M$. The boundary calculation leading to \eqref{eq:legendre-boundary-identity} requires only normal stationarity and a smooth, invertible convex transport branch. The comparison map is selected instantaneously, rather than evolved by an additional Euler equation.

At a fixed time, let $\Omega_t=\vb X(\Omega_0)$ be the actual fluid domain and $U_t=\vb Z(\Omega_0)$ its incompressible comparison domain. The normal-space formula \eqref{eq:normal-space} gives a potential $p$ on $U_t$ such that
\begin{equation*}
\vb X-\vb Z=\delta^2(\grad p)\circ\vb Z,\qquad
p=0\qq{on} \pd U_t.
\end{equation*}
Consequently, the map from the comparison configuration to the actual one is
\begin{equation*}
S(y)\coloneqq \vb X\circ\vb Z^{-1}(y)=y+\delta^2\grad p(y).
\end{equation*}
Motivated by the facts for dynamics in fixed domains (cf. \cite{Bre91,BL04}), we may assume that the map $S$ can be represented as an optimal transportation map by a convex potential. Accordingly, one can write
\begin{equation*}
\Theta(y)\coloneqq \frac12\abs*{y}^2+\delta^2p(y),\qquad
\Psi\coloneqq \Theta^*,\qquad
q(x)\coloneqq \frac{\Psi(x)-\frac12\abs{x}^2}{\delta^2}.
\end{equation*}
Here $\Theta^*$ is the Legendre transform, and we work on a branch where the mutually inverse gradient maps extend to diffeomorphisms of the closures. The source-to-comparison map is then
\begin{equation*}
\mathfrak T\coloneqq \vb Z\circ\vb X^{-1}
=S^{-1}=\grad\Psi
=\operatorname{Id}+\delta^2\grad q.
\end{equation*}
Since $\vb X_{\#}(\rho_0\dd{z})=\rho\mathbbm{1}_{\Omega_t}\dd{x}$ and $\vb Z_{\#}(\rho_0\dd{z})=\mathbbm{1}_{U_t}\dd{y}$, change of variables and convexity yield
\begin{equation*}
\mathfrak T_{\#}(\rho\mathbbm{1}_{\Omega_t}\dd{x})
=\mathbbm{1}_{U_t}\dd{y},\qquad
\det(I+\delta^2\grad^2q)=\rho,\qquad
I+\delta^2\grad^2q>0.
\end{equation*}
Moreover,
\begin{equation*}
\pd_{tt}\vb X
=\frac{\mathfrak T\circ\vb X-\vb X}{\delta^2}
=(\grad q)\circ\vb X,
\end{equation*}
which gives the momentum equation in \eqref{EMA}. Transport of the material mass and boundary gives the continuity equation and $V_{\pd\Omega_t}=\vb v\vdot\vb n$.

It remains to express the zero boundary value of the normal potential in the actual fluid coordinates. At $y=\mathfrak T(x)$, Legendre duality gives
\begin{equation*}
\begin{split}
\delta^2\qty\big(q(x)+p(y))
&=\Psi(x)+\Theta(y)-\frac12\abs{x}^2-\frac12\abs*{y}^2\\
&=x\vdot y-\frac12\abs{x}^2-\frac12\abs*{y}^2 =-\frac12\abs*{x-y}^2.
\end{split}
\end{equation*}
Using $y-x=\delta^2\grad q(x)$, we obtain the exact identity
\begin{equation}
\label{eq:legendre-boundary-identity}
p\circ\mathfrak T
=-q-\frac{\delta^2}{2}\abs{\grad q}^2.
\end{equation}
Because $\mathfrak T(\pd\Omega_t)=\pd U_t$ and $p=0$ on $\pd U_t$, this yields
\begin{equation*}
q+\frac{\delta^2}{2}\abs{\grad q}^2=0
\qq{on} \pd\Omega_t.
\end{equation*}
Thus the quadratic term in \eqref{BC} is the exact change from the normal potential on the incompressible configuration to the force potential on the actual configuration. 

This interpretation also explains the formal connection with incompressible Euler. For a smoothly convergent, well-prepared family with uniformly controlled derivatives,
\begin{equation*}
\rho=\det(I+\delta^2\grad^2q)
=1+\delta^2\vb\Delta q+O(\delta^4),
\end{equation*}
so the continuity equation formally yields $\div\vb v=0$, while \eqref{BC} tends to $q=0$ at the free surface. The physical pressure in the limiting Euler equation is the negative of the limiting $q$. At finite $\delta$, the comparison-domain potential $p$ is instead related to $q$ by the exact pullback identity \eqref{eq:legendre-boundary-identity}; the two potentials are not functions on the same domain. A rigorous singular limit requires parameter-uniform estimates and a suitable family of initial data, which goes beyond the scope of this manuscript.

Geometrically, the model replaces the instantaneous pressure reaction enforcing incompressibility by a finite normal displacement between two fluid configurations. The Monge--Ampère equation records the associated mass-to-volume conversion and the nonlinear boundary law carries the free-surface pressure condition through Legendre duality. Unlike a local equation of state, this force depends on the entire density distribution and on the shape of the moving fluid domain. This geometric picture motivates the Wasserstein projection used in the static construction in \S\ref{sec:static-construction}. The significance of the free-boundary formulation is therefore not merely that the classical Euler--Monge--Ampère equations can be posed on a moving support. Actually, the interior force and the boundary law arise from the same configuration-space geometry. The local theory studied here addresses the analytic realization of that coupling, linking optimal transportation and nonlinear elliptic regularity to the dynamics of a freely moving fluid.

\subsection{Related Work}\label{sec:related-work}

The Euler--Monge--Ampère (EMA) equations are closely related to the pressureless Euler-Poisson system, with the Poisson equation replaced by a nonlinear mass-transport constraint. Their interpretation as a conservative approximation of incompressible flow was developed by {\sc Brenier and Loeper} \cite{BL04}. In that work, the Vlasov-Monge--Ampère system is derived from a squared-distance penalization of volume-preserving configurations. Global energy-preserving weak solutions and local smooth periodic solutions are constructed, and convergence to incompressible Euler is proved for well-prepared, nearly monokinetic data. The fluid EMA equations correspond to the monokinetic description, in which a single velocity is assigned to each position. The strong-solution theory and the quasi-neutral limit were studied by {\sc Loeper} \cite{Loe05} in $\mathbb{T}^2$ and $\mathbb{T}^3$. For well-prepared data, the density perturbation is of order $\delta^2$, and the initial velocity is close to a divergence-free Euler velocity. Uniform estimates are obtained on compact time intervals within the lifespan of the limiting smooth Euler solution, and strong convergence follows as $\delta\to0$. Ill-prepared data are also treated. A central ingredient in \cite{Loe05} is the control of the nonlinear Monge--Ampère inversion near the uniform state. The global behavior of fluid EMA has also been investigated under symmetry assumptions. {\sc Tadmor and Tan} \cite{TT22} established sharp critical thresholds for radially symmetric whole-space solutions, separating global regularity from finite-time breakdown.  More recently, {\sc Luan} \cite{Lua26} studied the radially symmetric system with velocity damping. These results concern the Cauchy problem without the boundary effects.

A related Monge--Ampère evolution arises in the semi-geostrophic equations, which model large-scale rotating flows. In geostrophic coordinates, a transported density is coupled to a convex potential through optimal transportation. {\sc Benamou and Brenier} \cite{BB98} showed the existence of global weak solutions. {\sc Loeper} \cite{Loe06} developed the periodic local theory for positive densities. Later, {\sc Ambrosio, Colombo, De Philippis and Figalli} \cite{ACDF12} constructed global distributional solutions in physical coordinates. Local smooth theories in physical coordinates have been established by {\sc Cheng, Cullen and Feldman} \cite{CCF18} for the two-dimensional periodic system with variable Coriolis parameter, and by {\sc Silini} \cite{Sil23} on smooth bounded planar domains endowed with a conformally flat metric. Free boundary problems have also been studied: {\sc Cullen, Kuna, Pelloni and Wilkinson} \cite{CKPW19} constructed global weak solutions in geostrophic coordinates for a three-dimensional domain with a free upper boundary. Other recent developments concern approximation and weak existence for semi-geostrophic flow; see \cite{CM25, BELP26} and the references therein.

For free-boundary incompressible Euler, one can consult \cite{Wu97, Wu99, CL00, Lin05, CS07, SZ08, IPTT25} and the references therein for energy estimates and local well-posedness under the Taylor sign condition.

This manuscript concerns the free boundary problems for EMA. Unlike the periodic problem or transport to a prescribed comparison region, the moving boundary must here be determined together with the nonlinear elliptic force. The condition $q+\delta^2\abs{\grad q}^2/2=0$ retains the free-surface normal structure, while its linearization couples boundary motion to the Monge--Ampère response. The analysis therefore requires a static solver with the regularity needed by the evolution, together with the stability as both the density and the domain vary. Theorems~\ref{thm:static}--\ref{thm:stability} provide this local theory for fixed $\delta>0$.

\subsection{Main Results}\label{sec:main-results}

Fix $0<\alpha<1$ and $\delta>0$. The independent initial data are the domain $\Omega_0$, the density $\rho_0$, and the velocity $\vb v_0$. The potential is determined by the elliptic equation and its boundary condition; no independent initial value for $q$ is prescribed.
\begin{theorem}[Static Monge--Ampère]\label{thm:static}
Suppose that $\Omega\subset\R^d$ is a uniformly convex bounded domain with $\pd\Omega\in C^{2,\alpha}$, and that $f\in C^{1,\alpha}(\overline\Omega)$ satisfies (here and henceforward, $\sigma_0$ is a constant)
\begin{equation*}
f-1\ge\sigma_0>0 \qin\overline\Omega.
\end{equation*}
Then the problem
\begin{equation}
\label{eq:static-theorem-problem}
\begin{cases}
\det(I+\grad^2u)=f \qin\Omega,\\[1ex]
u+\dfrac12\abs{\grad u}^2=0 \qq{on}\pd\Omega
\end{cases}
\end{equation}
admits a unique classical solution $u$. Moreover, there hold
\begin{equation}
\label{eq:static-theorem-bounds}
\norm{u}_{C^{3,\alpha}(\overline\Omega)}\le C_*\qc
C_*^{-1}I\le I+\grad^2u\le C_*I, \qand
\grad_{\vb n}u\ge C_*^{-1}\qq{on} \pd\Omega,
\end{equation}
where $C_*$ depends only on $d$, $\alpha$, $\sigma_0$, $\norm{f}_{C^{1,\alpha}(\overline\Omega)}$, the $C^{2,\alpha}$ geometry and uniform convexity of $\Omega$.
\end{theorem}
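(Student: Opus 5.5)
The plan is to combine an optimal-transport construction of a weak solution with the a priori theory for fully nonlinear oblique boundary problems, following the geometric picture of \S\ref{sec:geometric-motivation}. Write $\Psi(x)=\frac12\abs{x}^2+u(x)$, so that \eqref{eq:static-theorem-problem} says that $\grad\Psi$ is a convex transport map pushing $f\mathbbm 1_\Omega\dd{x}$ to $\mathbbm 1_U\dd{y}$, where $\abs{U}=\int_\Omega f$ and $U$ is unknown. The boundary law $u+\frac12\abs{\grad u}^2=0$ should then be the Legendre-dual form of $p=0$ on $\pd U$, by \eqref{eq:legendre-boundary-identity}.

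\textbf{Step 1 (weak existence).} I would take the Wasserstein projection of $f\mathbbm 1_\Omega\dd{x}$ onto the convex set $\{\mu\le 1\}$ of measures with density at most one. Since $f\ge 1+\sigma_0$, the projection must saturate the constraint, so it equals $\mathbbm 1_U\dd{y}$ for some set $U$. The optimality conditions for this projection give a Brenier map $\grad\Psi$ together with a dual pressure $p\ge0$ on $U$ that vanishes outside $U$. Passing these conditions through the Legendre transform as in \S\ref{sec:geometric-motivation} gives a Brenier (weak) solution of \eqref{eq:static-theorem-problem}.

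\textbf{Step 2 (uniqueness).} The operator $\log\det(I+\grad^2u)$ is elliptic on the convex branch. The boundary operator $G(x,z,\xi)=z+\frac12\abs{\xi}^2$ satisfies $G_z=1>0$, and it is oblique precisely when $G_\xi\vdot\vb n=\grad_{\vb n}u>0$. Given obliqueness for classical solutions, the standard comparison principle applies: at a boundary maximum of $u_1-u_2$, the tangential gradients agree and the normal derivatives are ordered. Since $G$ is strictly increasing in $z$, this forces $u_1\le u_2$, and uniqueness follows. The same argument shows that the classical solution coincides with the transport solution of Step 1.

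\textbf{Step 3 (a priori estimates and continuity method).} I would run the continuity method on $f_t=(1-t)f_0+tf$ with $f_t-1\ge\sigma_0$, or alternatively upgrade the weak solution directly. In either route the work lies in the a priori bounds. A $C^0$ bound and a gradient bound follow from the boundedness of $\Omega$ and of $U$, together with the convexity of $\Psi$. The key quantitative fact is the obliqueness bound $\grad_{\vb n}u\ge C_*^{-1}$. Heuristically, $f>1$ forces expansion, so $\grad u$ points outward. To make this quantitative I would use barriers built from the uniform convexity of $\Omega$, comparing $u$ with explicit radial solutions of $\det(I+\grad^2 w)=\mathrm{const}$ posed on inner and outer balls tangent at a boundary point. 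Next come the global $C^2$ estimates, in the style of Lions--Trudinger--Urbas and Urbas for oblique Monge--Ampère problems. Here one differentiates the boundary condition tangentially, uses the uniform convexity of $\pd\Omega$ to control the mixed and double-normal second derivatives, and obtains two-sided Hessian bounds $C_*^{-1}I\le I+\grad^2u\le C_*I$. Evans--Krylov together with Lieberman--Trudinger boundary regularity then gives $C^{2,\beta}$.

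Finally, for $C^{3,\alpha}$ I would differentiate the equation. Each first derivative $\pd_ku$ satisfies a linear uniformly elliptic equation whose right-hand side $\pd_k\log f$ lies in $C^{0,\alpha}$. Differentiating $G$ along $\pd\Omega$ yields an oblique linear boundary condition, with $C^{1,\alpha}$ coefficients coming from the $C^{2,\alpha}$ boundary. Oblique Schauder theory then gives $\grad u\in C^{2,\alpha}$. I expect the main obstacle to be the quantitative obliqueness combined with the boundary double-normal second-derivative estimate. This is where the nonlinear boundary law couples to the unknown target $U$, and I would attack it first via the tangent-ball barriers and the Urbas-type auxiliary function.
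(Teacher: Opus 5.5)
\textbf{The gap is in Step 3 (with a related one in Step 1).} The continuity method needs a priori bounds for quantitative obliqueness $\grad_{\vb n}u\ge C_*^{-1}$ and for $C_*^{-1}I\le I+\grad^2u\le C_*I$. Neither bound follows from the theories you cite.
\begin{itemize}
\item Lions--Trudinger--Urbas treat Neumann data, where obliqueness is structural.
\item Urbas's oblique and second-boundary-value theories either assume obliqueness structurally or derive it from the uniform convexity of a \emph{prescribed} target. Here the image $U$ is unknown and need not be convex.
\item In those theories the global $C^2$ bound is proved only after obliqueness is known. In this problem the natural proof of obliqueness (Hopf's lemma, see below) uses ellipticity constants that depend on the $C^2$ bound. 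So the a priori route is circular unless you find an independent obliqueness estimate.
\end{itemize}
Your tangent-ball barriers do not supply that estimate.
\begin{itemize}
\item On an inner ball $B\subset\Omega$ you know only $u\le0$ on $\pd B$. This gives an upper barrier for $u$, but says nothing about $\grad_{\vb n}u$ at the tangency point.
\item On an enclosing ball, you would compare $u$ with a radial $w$ on $\Omega$ through the nonlinear operator $u+\frac12\abs{\grad u}^2$. At a boundary maximum of $w-u$ this requires the sign of $(\grad u+\grad w)\vdot\vb n$, which is precisely the obliqueness you want to prove.
\item A secondary issue: the path $f_t=(1-t)f_0+tf$ needs a starting solution on $\Omega$. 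The explicit quadratic solutions satisfy the boundary law only on balls.
\end{itemize}
Step 1 also does not yet produce a solution of \eqref{eq:static-theorem-problem}. The boundary law holds only once you know that the Brenier map is a homeomorphism of closures carrying $\pd\Omega$ onto $\pd U$, and that the active set is exactly $\{\varphi<0\}\supset\overline\Omega$. For the same reason, the comparison argument of Step 2 cannot be applied to the Brenier solution before its boundary regularity and obliqueness are known.

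\textbf{How the paper closes it.} The paper follows your alternative route of upgrading the weak solution, and the missing ingredients are these.
\begin{itemize}
\item The projection is recast as a partial transport from $(B_r,1)$ to $(\Omega,f)$ in which all of $\Omega$ is active. Figalli's theory then yields $C^1$ potentials and a homeomorphism $T\colon\overline\Omega\to\overline E$.
\item Fenchel's equality gives $\varphi\circ T=\Phi[u]\coloneqq u+\frac12\abs{\grad u}^2\le0$, hence $u\le0$.
\item Combining $u\le 0$ with $\vb\Delta u\ge c_\sigma$ on inner tangent balls gives the strict separation $\varphi<-\eta_b$ on $\overline\Omega$. (This is what the inner-ball barrier is actually good for.) It identifies $E=U$ and keeps the free boundary $\pd U$ at positive distance from $\overline\Omega$.
\item That separation is exactly the hypothesis under which the free-boundary regularity of Chen--Liu--Wang gives $\Theta\in C^{2,\alpha}$ up to $\pd U$. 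Inverting $\grad\Theta$ gives $\Psi\in C^{2,\alpha}(\overline\Omega)$ with $\grad^2\Psi>0$, and the boundary law is $\Phi[u]=\varphi\circ T=0$.
\item Obliqueness then comes \emph{after} the $C^2$ bound. $\Phi$ solves \eqref{eq:phi-elliptic} with right-hand side $\vb\Delta u\ge c_\sigma>0$ and zero boundary values. Hopf's lemma gives $\grad\Phi=\kappa\vb n$ with $\kappa>0$, so $\grad_{\vb n}u=\kappa(\grad^2\Psi)^{-1}[\vb n,\vb n]>0$.
\end{itemize}
Your final $C^{3,\alpha}$ bootstrap (differentiated equation plus oblique Schauder theory) and your maximum-principle uniqueness argument agree with the paper's.
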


At each fixed time, the relation between \eqref{eq:static-theorem-problem} and \eqref{EMA prob} is given by $u=\delta^2q$ and $f=\rho$. In particular, no smallness assumption on $\delta$ is needed for this rescaling, although estimates for $q$ and the dynamical lifespan may depend on $\delta$.

\begin{theorem}[Local existence]\label{thm:existence}
Suppose that $\Omega_0\subset\R^d$ is a uniformly convex bounded domain with $\pd\Omega_0\in C^{2,\alpha}$, and that
\begin{equation*}
\rho_0\in C^{1,\alpha}(\overline\Omega_0),\qquad
\vb v_0\in C^{2,\alpha}(\overline\Omega_0;\R^d),\qquad
\rho_0-1\ge\sigma_0>0 \qin\overline\Omega_0.
\end{equation*}
Then, there exists $T_*>0$ such that \eqref{EMA prob} admits a classical solution $(\rho,\vb v,q,\Omega_t)$ on $[0,T_*]$ with the prescribed initial data. For every $t\in[0,T_*]$, the domain $\Omega_t$ is uniformly convex and
\begin{equation*}
\pd\Omega_t\in C^{2,\alpha},\qquad
\rho_t\in C^{1,\alpha}(\overline\Omega_t),\qquad
\vb v_t\in C^{2,\alpha}(\overline\Omega_t;\R^d),\qquad
q_t\in C^{3,\alpha}(\overline\Omega_t).
\end{equation*}
After reducing $T_*$ if necessary, the solution satisfies
\begin{equation*}
\rho_t-1\ge\frac{\sigma_0}{2},\qquad
I+\delta^2\grad^2q_t>0 \qin\overline\Omega_t,\qquad
\grad_{\vb n}q_t>0 \qq{on}\pd\Omega_t,
\end{equation*}
and
\begin{equation*}
\sup_{0\le t\le T_*}
\qty(
\norm{\rho_t}_{C^{1,\alpha}(\overline\Omega_t)}
+\norm{\vb v_t}_{C^{2,\alpha}(\overline\Omega_t)}
+\norm{q_t}_{C^{3,\alpha}(\overline\Omega_t)}
)\le C_*.
\end{equation*}
The constants $T_*$ and $C_*$ may be chosen in terms of $d$, $\alpha$, $\delta$, $\sigma_0$, the initial density and velocity norms, and the $C^{2,\alpha}$ geometry and uniform convexity of $\Omega_0$.

In the Lagrangian framework, let $\vb X$ be the material flow given by $\pd_t\vb X_t(\cdot) = \vb v (t, \vb X_t(\cdot))$, and let $\underline{\vb v}\coloneqq \vb v\circ\vb X$. Then, $\vb X_t\colon \overline\Omega_0\to\overline\Omega_t$ is a $C^{2,\alpha}$-diffeomorphism, with
\begin{equation*}
\vb X\in C^{1,1}_tC^{2,\alpha}_x \qand
\underline{\vb v}\in C^{0,1}_tC^{2,\alpha}_x.
\end{equation*}
For every $0<\beta<\alpha$, there also hold
\begin{equation*}
\vb X\in C^2_tC^{2,\beta}_x \qand
\underline{\vb v}\in C^1_tC^{2,\beta}_x.
\end{equation*}
Here the time interval is $[0,T_*]$ and the spatial domain is $\overline\Omega_0$. The density is recovered by
\begin{equation}
\label{eq:material-mass}
\rho_t\circ\vb X_t\,J_{\vb X_t}=\rho_0 J_{\vb X_0},
\qquad J_{\vb X_t}\coloneqq \det(\grad\vb X_t).
\end{equation}
\end{theorem}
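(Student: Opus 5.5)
We propose to prove Theorem~\ref{thm:existence} by a Lagrangian fixed-point argument, with the static Theorem~\ref{thm:static} serving as the force solver at each instant. The unknown is the flow map $\vb X$ on the fixed reference domain $\overline\Omega_0$. For $T>0$ and $M>0$, let $\mathcal B_{T,M}$ be the set of maps $\vb X\in C^{1,1}([0,T];C^{2,\alpha}(\overline\Omega_0;\R^d))$ such that $\vb X_0=\mathrm{Id}$, $\pd_t\vb X_0=\vb v_0$, and $\norm{\vb X}_{C^{1,1}_tC^{2,\alpha}_x}\le M$. Given such an $\vb X$, we set $\Omega_t=\vb X_t(\Omega_0)$. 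For $T$ small depending on $M$, the estimate $\norm{\vb X_t-\mathrm{Id}}_{C^{2,\alpha}}\le CMt$ shows that $\vb X_t$ is a $C^{2,\alpha}$-diffeomorphism. It also shows that $\pd\Omega_t$ is $C^{2,\alpha}$ and stays uniformly convex, since the second fundamental form depends continuously on the $C^2$ parametrization. We define $\rho_t$ through \eqref{eq:material-mass}, namely $\rho_t=(\rho_0/J_{\vb X_t})\circ\vb X_t^{-1}$. This function lies in $C^{1,\alpha}(\overline\Omega_t)$, and $\rho_t-1\ge\sigma_0/2$ once $|J_{\vb X_t}-1|\le CMt$ is small. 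Theorem~\ref{thm:static} with $f=\rho_t$ then yields $u_t=\delta^2 q_t$, together with the uniform bounds \eqref{eq:static-theorem-bounds}, the ellipticity $I+\delta^2\grad^2 q_t>0$ and $\grad_{\vb n}q_t>0$. All constants are uniform in $t$ because the geometric and density norms are. The map $\Phi(\vb X)$ is defined by
\begin{equation*}
\Phi(\vb X)(t)=\mathrm{Id}+t\vb v_0+\int_0^t(t-s)\,(\grad q_s)\circ\vb X_s\dd{s}.
\end{equation*}
Its fixed points solve $\pd_{tt}\vb X=(\grad q)\circ\vb X$, and the Eulerian velocity is recovered as $\vb v=\pd_t\vb X\circ\vb X^{-1}$.

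We first check that $\Phi$ maps the ball into itself. Since $q_s\in C^{3,\alpha}$ and $\vb X_s\in C^{2,\alpha}$, the composition $(\grad q_s)\circ\vb X_s$ is bounded in $C^{2,\alpha}(\overline\Omega_0)$ by $C_*(M)$. Consequently $\norm{\Phi(\vb X)}_{C^{1,1}_tC^{2,\alpha}}\le C(\norm{\vb v_0}_{C^{2,\alpha}})+C_*(M)T$, and we choose $M$ large and then $T$ small. The kinematic condition $V=\vb v\vdot\vb n$ and the continuity equation follow automatically from the Lagrangian definitions. Note that the loss from $C^{2,\alpha}$ to $C^{1,\alpha}$ in passing from $\vb X$ to $\rho$ is exactly compensated by the two-derivative gain of the Monge--Ampère problem, $q\in C^{3,\alpha}$. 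This balance is what makes the scheme close without derivative loss.

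The main obstacle is to obtain a contraction or compactness for $\Phi$. This requires controlling $(\grad q^{(1)}_s)\circ\vb X^{(1)}_s-(\grad q^{(2)}_s)\circ\vb X^{(2)}_s$ when both the domain and the density vary. Two routes are possible, and we plan to combine them. The first route is compactness: $\Phi(\mathcal B_{T,M})$ is bounded in $C^{1,1}_tC^{2,\alpha}_x$. On the other hand, $\mathcal B_{T,M}$ is convex and compact in $C^1_tC^{2,\beta}_x$ for $\beta<\alpha$, by Arzelà--Ascoli. We therefore aim to apply Schauder's fixed point theorem in that weaker topology, which requires continuity of $\Phi$ there. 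To establish continuity, we pull the static problem back to $\Omega_0$ via $\vb X_s$. Uniqueness in Theorem~\ref{thm:static}, together with the uniform $C^{3,\alpha}$ bound, gives this continuity by a subsequence argument: any limit of $\vb u^{(n)}\circ\vb X^{(n)}$ in $C^{3,\beta}$ solves the pulled-back limit problem, so it must equal the unique solution. The second route would upgrade this to a genuine stability estimate by linearizing the pulled-back problem; the linearization is an oblique-derivative boundary problem $\mathrm{tr}((I+\grad^2u)^{-1}\grad^2 w)=g$, $w+\grad u\vdot\grad w=h$, whose obliqueness comes from $\grad_{\vb n}u>0$. This second route is needed only for uniqueness and continuous dependence, so we will rely on the first route for existence.

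Finally, the regularity claims follow from the equation $\pd_{tt}\vb X=(\grad q)\circ\vb X\in L^\infty_tC^{2,\alpha}_x$, which gives $\vb X\in C^{1,1}_tC^{2,\alpha}_x$. The continuity argument above, applied along the solution, shows that $t\mapsto(\grad q_t)\circ\vb X_t$ is continuous into $C^{2,\beta}$. This yields $\vb X\in C^2_tC^{2,\beta}_x$ and $\underline{\vb v}\in C^1_tC^{2,\beta}_x$. The remaining bounds are inherited uniformly from the ball and from \eqref{eq:static-theorem-bounds}.
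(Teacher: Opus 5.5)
Your proposal is correct in substance, but it organizes the existence step differently from the paper. The paper uses no fixed-point theorem. It builds an explicit polygonal (forward-Euler) approximation \eqref{eq:polygonal-step} with the force frozen at the grid configurations, and proves uniform $C^{2,\alpha}$ and Lipschitz-in-time bounds. It then extracts a limit by Arzel\`a--Ascoli in $C_tC^{2,\beta}_x$ and passes to the limit in the integral equations, using the frozen-map error \eqref{eq:frozen-map-error} and Lemma~\ref{lem:force-compactness}.

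You instead apply Schauder's theorem to $\Phi$ on $\mathcal B_{T,M}$, which is convex and compact in $C^1_tC^{2,\beta}_x$. Both are Peano-type compactness arguments resting on the same key input: continuity of $\vb X\mapsto(\grad q)\circ\vb X$ from $C^2$ into $C^{2,\beta}$, obtained from static uniqueness and the uniform bounds of Theorem~\ref{thm:static}. Both also avoid a contraction, which would lose a derivative at the $C^{2,\alpha}$ level.

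Your route is shorter. There is no discrete scheme and no frozen-map error, and continuity of $\Phi$ follows from pointwise-in-$s$ convergence of the force plus dominated convergence. In exchange, you must check that $\mathcal B_{T,M}$ is closed under $C^1_tC^{2,\beta}_x$ convergence. This uses lower semicontinuity of H\"older seminorms and the same Banach-space integral argument the paper uses for endpoint time regularity. The paper's route is constructive and only ever evaluates the static solver at the grid configurations.

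Two slips need repair.

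First, your self-map estimate is wrong as written. The time-Lipschitz seminorm of $\pd_t\Phi(\vb X)$ is bounded by $\sup_s\norm{(\grad q_s)\circ\vb X_s}_{C^{2,\alpha}}$, with no factor $T$. If that bound really were a $C_*(M)$ growing with $M$, then ``choose $M$ large, then $T$ small'' would not close. The fix is to observe that once $MT<R/2$, every $\vb X_t$ lies in the fixed configuration ball $\mathscr X_*$. The force is then bounded by a constant $M_f$ independent of $M$, as in \eqref{eq:uniform-force}. Take $M>\norm{\mathrm{Id}}_{C^{2,\alpha}}+\norm{\vb v_0}_{C^{2,\alpha}}+M_f$, and only then shrink $T$.

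Second, $u^{(n)}\circ\vb X^{(n)}$ is only $C^{2,\alpha}$ on $\overline\Omega_0$, because $\vb X^{(n)}\in C^{2,\alpha}$. So there is no $C^{3,\beta}$ limit to extract. Argue as in Lemma~\ref{lem:force-compactness}:
\begin{itemize}
\item extract a $C^2$ limit of the pulled-back potentials;
\item identify it by uniqueness (the limit stays on the elliptic branch, since $I+\delta^2\grad^2q$ is a limit of positive definite matrices with determinant bounded below);
\item deduce $C^1$ convergence of the forces;
\item interpolate with their uniform $C^{2,\alpha}$ bound to obtain $C^{2,\beta}$ convergence.
\end{itemize}
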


No divergence-free or irrotationality condition is imposed on $\vb v_0$. The initial potential and its positive outer normal derivative are supplied by Theorem~\ref{thm:static}.

For solutions starting from different domains, comparison is made on the fixed reference domain $\Omega_*$ by means of the harmonic coordinate maps $\mathcal Y_{\Omega_0}$ introduced in \S\ref{sec:geometry}. For two solutions, set
\begin{equation*}
\vb X^i_0 = \text{Id} \qc \vb Y_t^i\coloneqq \vb X_t^i\circ\mathcal Y_{\Omega_0^i} \qc
\xi_t^i\coloneqq (\rho_t^i\circ\vb Y_t^i)J_{\vb Y_t^i},
\qq{where} i=1,2.
\end{equation*}
The continuity equation implies $\xi_t^i=\xi_0^i$.

\begin{theorem}[Uniqueness, stability and continuous dependence]\label{thm:stability}
Suppose that the initial domains lie in a sufficiently small common $C^{2,\alpha}$ neighbourhood of a uniformly convex reference domain, and that the two solutions satisfy uniform bounds in the classes of Theorem~\ref{thm:existence}. On a sufficiently short common lifespan, the difference energy
\begin{equation}
\label{eq:difference-energy-main}
\mathcal E(t)\coloneqq 
\norm{\vb Y_t^1-\vb Y_t^2}_{L^2(\Omega_*)}^2
+
\norm{\vb v_t^1\circ\vb Y_t^1-\vb v_t^2\circ\vb Y_t^2}_{L^2(\Omega_*)}^2
\end{equation}
satisfies
\begin{equation}
\label{eq:stability-main}
\mathcal E(t)
\le
\exp(Ct)
\qty(
\sqrt{\mathcal E(0)}
+Ct\norm{\xi_0^1-\xi_0^2}_{(H^1)'(\Omega_*)}
)^2.
\end{equation}
Here $(H^1)'(\Omega_*)$ denotes the dual of $H^1(\Omega_*)$, including test functions with nonzero boundary traces. The constant $C$ depends on the fixed parameter, the uniform regularity and geometric bounds. 

Consequently, solutions with the same initial data coincide within the class of Theorem~\ref{thm:existence}.

Continuous dependence holds at every exponent $0<\beta<\alpha$. More precisely, within a family having uniform initial bounds and a common positive density gap, convergence of
\begin{equation}
\label{eq:initial-convergence}
\mathcal Y_{\Omega_0}\text{ in } C^{2,\alpha}(\overline\Omega_*) \qc
\rho_0\circ\mathcal Y_{\Omega_0}\text{ in } C^{1,\alpha}(\overline\Omega_*), \qand
\vb v_0\circ\mathcal Y_{\Omega_0}\text{ in } C^{2,\alpha}(\overline\Omega_*)
\end{equation}
implies convergence, uniformly on a common time interval, of
\begin{equation}
\label{eq:solution-convergence}
\vb Y,\quad\vb v\circ\vb Y
\qq{in} C^{2,\beta}(\overline\Omega_*) \qand
\rho\circ\vb Y
\qq{in} C^{1,\beta}(\overline\Omega_*).
\end{equation}
The force $(\grad q)\circ\vb Y$ also converges in $C^{2,\beta}(\overline\Omega_*)$.
\end{theorem}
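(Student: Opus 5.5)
The proof will be an $L^2$ energy estimate for the difference of the two Lagrangian flows, written on $\Omega_*$. The key input is the optimal-transport structure of the force: the Monge--Ampère step is a Wasserstein projection, so it is $L^2$-stable under perturbations of the mass distribution. After that, uniqueness follows at once, and continuous dependence follows by interpolation.

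\textbf{Reduction to a second-order ODE.} On $\Omega_*$ the Lagrangian equations read
\begin{equation*}
\pd_t\vb Y^i=\vb v^i\circ\vb Y^i,\qquad \pd_t(\vb v^i\circ\vb Y^i)=(\grad q^i)\circ\vb Y^i .
\end{equation*}
Write $\vb W\coloneqq\vb Y^1-\vb Y^2$ and $\vb V\coloneqq\vb v^1\circ\vb Y^1-\vb v^2\circ\vb Y^2$. Then
\begin{equation*}
\tfrac12\tfrac{\dd}{\dd t}\mathcal E\le \sqrt{\mathcal E}\,\Big(\norm{\vb V}_{L^2}+\norm{(\grad q^1)\circ\vb Y^1-(\grad q^2)\circ\vb Y^2}_{L^2(\Omega_*)}\Big).
\end{equation*}
It therefore suffices to prove the force-stability estimate
\begin{equation*}
\norm{(\grad q^1)\circ\vb Y^1-(\grad q^2)\circ\vb Y^2}_{L^2(\Omega_*)}\le C\norm{\vb W}_{L^2(\Omega_*)}+C\norm{\xi_0^1-\xi_0^2}_{(H^1)'(\Omega_*)}.
\end{equation*}
Given this, $\sqrt{\mathcal E}$ satisfies $\frac{\dd}{\dd t}\sqrt{\mathcal E}\le C\sqrt{\mathcal E}+C\norm{\xi^1_0-\xi^2_0}_{(H^1)'}$. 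Grönwall's inequality then gives \eqref{eq:stability-main}, after a harmless adjustment of the constants.

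\textbf{Force stability (the main obstacle).} Write $\mathfrak T^i=\mathrm{Id}+\delta^2\grad q^i$ for the transport map. By \S\ref{sec:geometric-motivation} it is the Brenier map sending $\rho^i\mathbbm 1_{\Omega^i_t}$ onto $\mathbbm 1_{U^i_t}$. Here $U^i_t$ is the free comparison domain, characterised by the boundary law through \eqref{eq:legendre-boundary-identity}. Pulled back, $\mathfrak T^i\circ\vb Y^i$ is a nearest point to $\vb Y^i$ in $\mathscr M$ with respect to the weighted metric.

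The first step is to show that the nearest-point projection onto $\mathscr M$ is Lipschitz near the solution branch. For this I would use the uniform bounds of Theorem~\ref{thm:static}, namely $C^{3,\alpha}$ control of $q$, $C_*^{-1}I\le I+\delta^2\grad^2q\le C_*I$, and $\grad_{\vb n}q\ge C_*^{-1}$. These give uniform convexity of the Legendre dual and a nondegenerate boundary, which is what the Lipschitz bound needs.

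The concrete plan is to differentiate along the interpolation $\vb Y^s=s\vb Y^1+(1-s)\vb Y^2$, with densities $\xi^s=s\xi^1_0+(1-s)\xi^2_0$. For intermediate data close to both solutions, the static problem is solvable by Theorem~\ref{thm:static}, which needs the smallness of the $C^{2,\alpha}$ neighbourhood and the short lifespan. The derivative $\dot u$ in $s$ then solves a linearised Monge--Ampère problem. It is
\begin{equation*}
\div\!\big(\rho\,(I+\grad^2u)^{-1}\grad\dot u\big)=\dot\rho
\end{equation*}
in divergence form, up to transport terms, with an oblique boundary condition $\dot u+\grad u\vdot\grad\dot u=\cdots$ whose obliqueness comes from $\grad_{\vb n}u>0$.

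On this problem an $H^1$ energy estimate bounds $\norm{\grad\dot u}_{L^2}$ by the $(H^1)'$ norm of the density variation plus the $L^2$ size of the domain variation $\vb W$. Test functions with nonzero trace are needed precisely because of the boundary term, which explains the choice of $(H^1)'(\Omega_*)$. The hard points are coercivity of the bilinear form with the nonlinear Robin-type boundary term, and controlling the boundary-motion contribution only in $L^2$. I would try to obtain both from the convexity and duality identity \eqref{eq:legendre-boundary-identity}, rewriting the problem on the comparison domain where $p=0$.

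\textbf{Uniqueness and continuous dependence.} Identical data give $\mathcal E(0)=0$ and $\xi_0^1=\xi_0^2$, so $\mathcal E\equiv0$ and the solutions coincide. The case $\Omega_0$ shared is the case $\mathcal Y$ equal.

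For continuous dependence, the $L^2$ convergence from \eqref{eq:stability-main} is combined with the uniform $C^{2,\alpha}$ and $C^{1,\alpha}$ bounds of Theorem~\ref{thm:existence}. Hölder interpolation between these gives convergence of $\vb Y$ and $\vb v\circ\vb Y$ in $C^{2,\beta}$, and of $\rho\circ\vb Y=\xi_0/J_{\vb Y}$ in $C^{1,\beta}$. Note that $\mathcal E(0)\to0$ and $\xi^n_0\to\xi_0$ in $(H^1)'$ both follow from \eqref{eq:initial-convergence}.

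For the force, I would use uniqueness for the static problem together with the uniform $C^{3,\alpha}$ bound. Arzelà--Ascoli gives a subsequence of $(\grad q)\circ\vb Y$ converging in $C^{2,\beta}$. Its limit solves the limiting static problem, so it is the unique solution, and therefore the whole sequence converges.
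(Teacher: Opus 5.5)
Your outer structure matches the paper's. Gr\"onwall reduces \eqref{eq:stability-main} to the force bound $\norm{(\grad q^1)\circ\vb Y^1-(\grad q^2)\circ\vb Y^2}_{L^2(\Omega_*)}\le C\norm{\vb W}_{L^2(\Omega_*)}+C\norm{\xi_0^1-\xi_0^2}_{(H^1)'(\Omega_*)}$. That bound comes from integrating a linearized estimate along the chord $(\vb Y^s,\xi^s)$, and uniqueness and interpolation then follow as you say. Your compactness-plus-static-uniqueness argument for the force is a legitimate substitute for the paper's interpolation of the $L^2$ force bound.

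\textbf{The gap.} The linearized estimate itself is the only non-routine step, and you do not supply it: you name its two difficulties and say you would try to resolve them. As formulated it would not close.
\begin{itemize}
\item If you differentiate the pulled-back problem in $s$, the Hessian pullback \eqref{eq:parameter-hessian-pullback} and the Jacobian produce terms in $\grad\dot{\vb Y}_s$ and $\grad^2\dot{\vb Y}_s$.
\item The differentiated boundary law contains the trace of $\dot{\vb Y}_s$ on $\pd\Omega_*$.
\end{itemize}
None of these is controlled by $\norm{\dot{\vb Y}_s}_{L^2(\Omega_*)}$. Obliqueness $\grad_{\vb n}u>0$ alone gives Schauder solvability of the linearized problem, which the paper uses for the implicit-function step, not an $H^1$ estimate of this kind. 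The Lipschitz nearest-point projection heuristic does not help either: $\mathscr M$ is not convex, the weight differs between the two solutions, and the paper deliberately avoids assuming a smooth projection.

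\textbf{What is missing: the choice of unknown and the conormal identity.} The paper sets $h_s=\delta^2[(\grad q_s)\circ\vb Y_s\vdot\dot{\vb Y}_s-\dv{s}(q_s\circ\vb Y_s)]$, i.e.\ $-\delta^2$ times the Eulerian variation of $q_s$. By the envelope formula for the Legendre transform this equals $\delta^2(\pd_s p_s)\circ\mathfrak T_s\circ\vb Y_s$, so your comparison-side instinct points at the right object. Two cancellations then occur.
\begin{itemize}
\item In $\grad h_s$ the $\grad\dot{\vb Y}_s$ terms cancel, giving \eqref{eq:force-derivative-identity}. Jacobi--Piola applied to $\det\grad(\mathfrak T_s\circ\vb Y_s)=\xi_s$ gives $\operatorname{div}(P_s\grad h_s)=\operatorname{div}G_s-\dot\xi_s$ with $G_s=\xi_s(\grad\vb Y_s)^{-1}\dot{\vb Y}_s$. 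The domain variation therefore enters only undifferentiated, as a flux.
\item On the boundary, $\Phi=q_s+\frac{\delta^2}{2}\abs{\grad q_s}^2$ vanishes, so $\grad\Phi=(I+\delta^2\grad^2q_s)\grad q_s=\kappa_s\vb n_s$ with $\kappa_s>0$ by Hopf. Hence the oblique vector $\grad q_s$ points exactly in the conormal direction of the linearized Monge--Amp\`ere operator. The differentiated boundary law becomes $(P_s\grad h_s-G_s)\vdot\vb N=-b_sh_s$ with $b_s>0$.
\end{itemize}
In the weak form \eqref{response weak}, the boundary flux of $G_s$ cancels the trace term, and $\dot{\vb Y}_s$ appears only through $\int_{\Omega_*}G_s\vdot\grad\zeta$. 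The Robin term makes the form coercive on all of $H^1(\Omega_*)$. This yields \eqref{eq:response-h-bound}, and with $\grad^2q_s$ bounded, \eqref{force response}. Without these two structural facts your energy estimate does not close with $\norm{\vb W}_{L^2}$ on the right-hand side.
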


The static construction uses projection onto a density constraint in quadratic optimal transport. A strict separation estimate permits free-side partial-transport regularity to be transferred to the source domain without assuming convexity of the unknown transport image. The nonlinear boundary law then yields strict obliqueness and a further derivative of the potential without requiring a third derivative of the boundary. For the evolution, a polygonal construction supplies existence, while an independently justified differentiation of the static solver gives a low-order force-response estimate. Its conormal boundary structure cancels the displacement flux and controls variations in both the configuration and the transported mass.

\section{Static Monge--Ampère Problems}\label{sec:static}

In this section, we study the static problems for the Monge--Ampère equations in a uniformly convex, bounded, $C^{2, \alpha}$ domain. The problem can be written as
\begin{equation}
\begin{cases}
\det(I+\grad^2 u) = f \qin \Omega \\[1ex]
u + \abs{\grad u}^2/2 = 0 \qq{on} \pd\Omega.
\end{cases}
\label{MA static}
\end{equation}
We prove Theorem~\ref{thm:static}, with the same assumptions and quantitative conclusions as in the introduction. In particular,
\begin{equation}
f-1\ge\sigma_0>0\qq{on} \overline\Omega.
\label{f condition}
\end{equation}
The argument first constructs a solution for each datum, then proves uniqueness and data-uniform estimates.

For $d\ge2$, the transport argument in \S\ref{sec:static-construction} applies. The one-dimensional case has a direct formula: if $\Omega=(a,b)$ and $M=\int_a^b f$, set
\begin{equation*}
u'(x)=c+\int_a^x f(t)\dd{t}-(x-a),\qquad
c=\frac1M\int_a^b(x-a)f(x)\dd{x}-\frac M2,
\qquad u(a)=-\frac{c^2}{2}.
\end{equation*}
Then $1+u''=f$ and $\int_a^b u'f=0$, so the nonlinear boundary expression vanishes at both endpoints. Since $u''\ge\sigma_0$, this integral identity gives $u'(a)<0<u'(b)$. The asserted regularity and bounds follow directly; the dynamical arguments in \S\ref{sec:dynamics} are dimension-independent.

\subsection{Construction of the Static Solution}\label{sec:static-construction}

First, we consider the Wasserstein projection problems:
\begin{equation}
\inf_{\chi\in K_1} W_2^2(f \mathbbm{1}_\Omega\dd{x}, \chi\dd{y}),
\label{wass}
\end{equation}
where $K_1$ is the density cap set defined by
\begin{equation}
K_1 \coloneqq  \Set*{\chi\in L^1_+(\mathbb{R}^d) \given  0\le\chi\le1 \qc  \int_{\R^d}\chi = \int_{\Omega} f \qc \int_{\R^d} \abs*{y}^2\chi(y) \dd{y} < \infty}.
\label{def K1}
\end{equation}
Throughout, the Wasserstein distance $W_2^2$ for two measures of the same finite mass denotes the infimum of $\int\abs*{x-y}^2\dd\gamma$ over their couplings. The admissible set is nonempty, since the indicator of a ball of volume $\int_\Omega f$ belongs to $K_1$. \cite[Proposition 5.2]{DPMSV16} yields a unique minimizer $\overline\chi$ and a measurable set $E$ such that
\begin{equation*}
\overline\chi=f\mathbbm{1}_{\Omega\setminus E}+\mathbbm{1}_E.
\end{equation*}
Since $f>1$ on $\Omega$ while $\overline\chi\le1$, the set $\Omega\setminus E$ is null. Consequently
\begin{equation*}
\overline\chi=\mathbbm{1}_E\qq{a.e.,}
\abs{E}=\int_\Omega f.
\end{equation*}
We first bound the support, before asserting any global regularity of the dual potentials. Write the optimal plan from $f\mathbbm{1}_\Omega$ to $\overline\chi$ as $\gamma$. By \cite[Lemma~5.1]{DPMSV16}, for $(x,y)\in\operatorname{spt}\gamma$,
\begin{equation*}
\overline\chi=1\qq{a.e. on} B(x,\abs*{x-y}).
\end{equation*}
Thus $\omega_d\abs*{x-y}^d\le\int_\Omega f$, and
\begin{equation}
\label{eq:projection-support}
\operatorname{spt}(\overline\chi\dd{y})
\subset\overline\Omega+\overline B_{R_f}(0),\qquad
R_f\coloneqq \qty(\omega_d^{-1}\int_\Omega f)^{1/d}.
\end{equation}
Fix a ball $B_r$ containing this compact set in its interior and satisfying $\abs{B_r}>\int_\Omega f$.

\subsubsection{Dual normalization and first variation}\label{sec:dual-variation}

Quadratic-cost duality (cf. \cite{Vil03}) gives
\begin{equation*}
\frac12W_2^2(f\mathbbm{1}_\Omega\dd{x},\overline\chi\dd{y})
=\sup_{\psi(x)+\varphi(y)\le\frac12\abs*{x-y}^2}\int_\Omega\psi f\dd{x}+\int_{\R^d}\varphi\overline\chi\dd{y}.
\end{equation*}
The potential $\psi$ may be chosen Lipschitz on $\overline\Omega$, since both transported supports are bounded. Its extension to the comparison side is fixed by
\begin{equation}
\label{eq:dual-extension}
\varphi(y)\coloneqq \inf_{x\in\overline\Omega}
\qty{\frac12\abs*{x-y}^2-\psi(x)},\qquad y\in\R^d.
\end{equation}
It is locally Lipschitz and tends to $+\infty$ as $\abs*{y}\to\infty$. An arbitrary extension of a dual potential would not suffice for the subsequent variation.

For the first variation, let $\chi\in K_1$ have bounded support and put $\chi_s=(1-s)\overline\chi+s\chi$. Choose optimal potentials $(\psi_s,\varphi_s)$ with the same extension convention and normalize $\psi_s$ at one fixed point of $\Omega$. It is routine to check that $\psi_s\to\psi$ uniformly on $\overline\Omega$ and $\varphi_s\to\varphi$ locally uniformly as $s\to 0$. Using the optimal potentials at $s=0$ and $s>0$ respectively gives the two duality bounds
\begin{equation*}
\begin{split}
\int_{\R^d}\varphi(\chi-\overline\chi)\dd{y}
&\le
\frac{W_2^2(f\mathbbm{1}_\Omega,\chi_s)-W_2^2(f\mathbbm{1}_\Omega,\overline\chi)}{2s}
\\
&\le\int_{\R^d}\varphi_s(\chi-\overline\chi)\dd{y}.
\end{split}
\end{equation*}
The integrals converge because the competitor has bounded support. Minimality therefore implies
\begin{equation}
\int_{\R^d}(\overline\chi-\chi)\varphi\dd{y}\le0
\qq{for every boundedly supported} \chi\in K_1.
\label{1st var}
\end{equation}
This class of variations is sufficient for Lemma~\ref{lem:saturation}.

\begin{lemma}[The saturation threshold]\label{lem:saturation}

There is $\ell\in\R$ such that, up to null sets,
\begin{equation*}
\overline\chi=1\qq{on} \{\varphi<\ell\},\qquad
\overline\chi=0\qq{on} \{\varphi>\ell\}.
\end{equation*}

\end{lemma}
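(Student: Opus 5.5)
We argue by a bathtub (rearrangement) argument based on the first-variation inequality \eqref{1st var}. Since $\overline\chi=\mathbbm{1}_E$ a.e., it suffices to find $\ell$ such that $\varphi\ge\ell$ a.e. on $E$ and $\varphi\le\ell$ a.e. on $\R^d\setminus E$. Recall that $\operatorname{spt}\overline\chi\subset B_r$ and $\abs{B_r}>\int_\Omega f=\abs{E}$. Define
\begin{equation*}
\ell\coloneqq \sup\Set*{s\in\R\given \abs{\{\varphi<s\}\cap B_r}\le\abs{E}}.
\end{equation*}
This is finite: $\varphi$ is continuous, hence bounded on $\overline B_r$, so small $s$ give an empty sublevel set, while $s>\max_{\overline B_r}\varphi$ gives measure $\abs{B_r}>\abs E$. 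By continuity of measure we then have $\abs{\{\varphi<\ell\}\cap B_r}\le\abs E\le\abs{\{\varphi\le\ell\}\cap B_r}$.

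The key step is a two-set exchange. Suppose, for contradiction, that there are levels $a<b$ and sets $A\subset\{\varphi\le a\}\setminus E$ and $D\subset E\cap\{\varphi\ge b\}$ of equal positive measure, both bounded. Then take
\begin{equation*}
\chi\coloneqq \mathbbm{1}_E-\mathbbm{1}_D+\mathbbm{1}_A.
\end{equation*}
This $\chi$ lies in $K_1$: it takes values in $[0,1]$, it has the same mass, and it is boundedly supported. Inequality \eqref{1st var} gives
\begin{equation*}
0\ge\int(\overline\chi-\chi)\varphi=\int_D\varphi-\int_A\varphi\ge(b-a)\abs{A}>0,
\end{equation*}
which is a contradiction. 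Hence for any rationals $a<b$, either $\abs{\{\varphi\le a\}\setminus E}=0$ or $\abs{E\cap\{\varphi\ge b\}}=0$. Boundedness of $A$ is easy to arrange by intersecting with a large ball, since $E\subset B_r$ up to null sets.

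We now convert this dichotomy into the threshold statement.

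\emph{Points of $\{\varphi<\ell\}$ lie in $E$.} Suppose $\abs{\{\varphi<\ell\}\setminus E}>0$. Pick rationals $a<b<\ell$ with $\abs{\{\varphi\le a\}\setminus E}>0$. The dichotomy forces $E\subset\{\varphi<b\}$ up to a null set. Then $\abs E\le\abs{\{\varphi<b\}\cap B_r}$. Moreover, the positive-measure set $\{\varphi\le a\}\setminus E$ is contained in $\{\varphi<b\}$. If part of it lies in $B_r$, this gives $\abs{\{\varphi<b\}\cap B_r}>\abs E$, contradicting $b<\ell$.

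\emph{Points of $\{\varphi>\ell\}$ lie outside $E$.} This case is symmetric. If $\abs{E\cap\{\varphi>\ell\}}>0$, pick rationals $\ell<a<b$ with $\abs{E\cap\{\varphi\ge b\}}>0$. The dichotomy gives $\{\varphi\le a\}\setminus E$ null, so $\{\varphi\le a\}\subset E$ up to a null set. Then
\begin{equation*}
\abs{\{\varphi<a\}\cap B_r}\le\abs{E}-\abs{E\cap\{\varphi\ge b\}}<\abs E.
\end{equation*}
This contradicts $a>\ell$ in the definition of the supremum.

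The main obstacle is the mismatch between $B_r$ and the whole space in the first case. Points of $\{\varphi<\ell\}$ outside $B_r$ but not in $E$ are not directly controlled by the definition of $\ell$. I would handle them by running the exchange directly there, using $A\subset\{\varphi\le a\}\setminus B_r$ bounded; the dichotomy again yields $E$ essentially inside $\{\varphi<b\}$. I would then compare with $\{\varphi<b\}\cap B_r$. For this I would use that $\varphi$ is continuous and that $\varphi(y)\ge\frac12\operatorname{dist}(y,\overline\Omega)^2-\max\psi$ grows. Alternatively, I would first enlarge $r$ so that $\{\varphi\le\max_{\overline B_{r_0}}\varphi+1\}\subset B_r$, and then define $\ell$ using this larger ball, which removes the issue. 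A smaller bookkeeping point is that \eqref{1st var} needs only boundedly supported competitors, which the construction respects.
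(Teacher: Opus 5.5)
Your key step is the same as the paper's. A mass-preserving exchange moves mass from a part of the occupied set where $\varphi\ge b$ to a part of the unoccupied set where $\varphi\le a<b$, and this contradicts \eqref{1st var}. The paper uses normalized indicators on sets where $\overline\chi>\epsilon$, resp.\ $\overline\chi<1-\epsilon$, so it does not need $\overline\chi=\mathbbm 1_E$; with that identification your equal-measure swap is equivalent. Your opening reformulation is reversed, however. The lemma says $\varphi\le\ell$ a.e.\ on $E$ and $\varphi\ge\ell$ a.e.\ on $\R^d\setminus E$, i.e.\ mass sits where $\varphi$ is low. Your two cases prove the correct version, so this is only a slip, but fix it.

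The real difference is how you choose $\ell$, and the paper's route is simpler. Your dichotomy for all rational $a<b$ immediately gives $\operatorname{esssup}_E\varphi\le\operatorname{essinf}_{\R^d\setminus E}\varphi$: if not, take rationals strictly in between. Then $\ell\coloneqq\operatorname{esssup}_E\varphi$ is finite, because $E$ is bounded, $\abs{E}>0$ and $\varphi$ is continuous. That is the whole proof, and no comparison with $B_r$ ever arises.

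Your bathtub level, defined through $\abs{\{\varphi<s\}\cap B_r}$, is what creates the obstacle you flag in the first case. Your two repairs fare differently.

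\emph{First repair.} As stated it is not sufficient. Continuity and quadratic growth only make $\{\varphi<b\}$ a bounded open set, and that does not exclude a piece of it lying outside $B_r$. To make it work you need the segment property coming from \eqref{eq:dual-extension}. If $\frac12\abs{x-y}^2-\psi(x)<b$ with $x\in\overline\Omega$, then the whole segment $[x,y]$ lies in $\{\varphi<b\}$. So a point of $\{\varphi<b\}$ outside $B_r$ forces $\{\varphi<b\}$ to meet $B_r\setminus\operatorname{spt}(\overline\chi\dd{y})$ in a nonempty open set. That set has positive measure and is a.e.\ disjoint from $E$, so $\abs{\{\varphi<b\}\cap B_r}>\abs{E}$.

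\emph{Second repair.} Enlarging the ball works once you add the missing inequality. For the enlarged ball, every $s>\max_{\overline B_{r_0}}\varphi$ satisfies $\abs{\{\varphi<s\}\cap B_r}\ge\abs{B_{r_0}}>\abs{E}$. Hence $\ell\le\max_{\overline B_{r_0}}\varphi$, so $\{\varphi<\ell\}\subset B_r$, which closes the first case.

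What your construction buys is an explicit level with the mass bracketing \eqref{eq:saturation-threshold-mass} built in; the paper reads that bracketing off afterwards. The cost is the extra bookkeeping above, which the esssup/essinf choice avoids entirely.
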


\begin{proof}
If $a<b$ and both $\{\overline\chi>0,\varphi>b\}$ and $\{\overline\chi<1,\varphi<a\}$ have positive measure, choose bounded subsets $E_+$ and $E_-$ of finite positive measure on which, respectively, $\overline\chi>\epsilon$ and $\overline\chi<1-\epsilon$, for some $\epsilon>0$. The perturbation
\begin{equation*}
h\coloneqq \frac{\mathbbm{1}_{E_-}}{\abs{E_-}}-\frac{\mathbbm{1}_{E_+}}{\abs{E_+}}
\end{equation*}
has zero mass, and $\overline\chi+sh\in K_1$ for sufficiently small $s>0$. Equation \eqref{1st var} would imply $\int h\varphi\ge0$, whereas
\begin{equation*}
\int h\varphi=\fint_{E_-}\varphi-\fint_{E_+}\varphi<a-b<0.
\end{equation*}
Consequently
\begin{equation*}
\operatorname{esssup}_{\{\overline\chi>0\}}\varphi
\le\operatorname{essinf}_{\{\overline\chi<1\}}\varphi.
\end{equation*}
The left side is finite because the occupied support is bounded, and $\varphi$ is globally bounded below. The unoccupied set has positive measure in some bounded ball, so the right side is also finite. Any intervening $\ell$ has the asserted properties. In particular,
\begin{equation}
\label{eq:saturation-threshold-mass}
\abs{\{\varphi<\ell\}}\le\int_\Omega f\le\abs{\{\varphi\le\ell\}}.
\end{equation}

\end{proof}

Subtract $\ell$ from $\varphi$ and add $\ell$ to $\psi$. This preserves duality and the extension convention, and yields
\begin{equation}
\label{eq:normalized-threshold}
\{\varphi<0\}\subset E\subset\{\varphi\le0\}
\qq{up to null sets.}
\end{equation}
Define the convex potentials
\begin{equation*}
\Theta(y)\coloneqq \frac12\abs*{y}^2-\varphi(y),\qquad
\Psi\coloneqq \Theta^*,\qquad
u(x)\coloneqq \Psi(x)-\frac12\abs{x}^2\quad(x\in\overline\Omega).
\end{equation*}
Here $\Theta$ is the supremum of affine functions indexed by $\overline\Omega$. On $\overline\Omega$, conjugacy recovers $\Psi(x)=\abs{x}^2/2-\psi(x)$. The optimal maps are (cf. \cite{Bre91})
\begin{equation*}
S=\grad\Theta,\qquad T=\grad\Psi,\qquad
T_\#(f\mathbbm{1}_\Omega\dd{x})=\mathbbm{1}_E\dd{y},\qquad
S_\#(\mathbbm{1}_E\dd{y})=f\mathbbm{1}_\Omega\dd{x}.
\end{equation*}
At this point these map identities hold almost everywhere; boundary values will follow from the next external input.

\subsubsection{The reverse partial-transport problem}\label{sec:reverse-transport}

Consider partial transport of mass $\int_\Omega f$ from $(B_r,1)$ to $(\Omega,f)$. Every feasible plan has second marginal exactly $f\mathbbm{1}_\Omega\dd{x}$, and its first marginal is a density bounded by $1$ on $B_r$. Conversely, any such density and a coupling to $f\mathbbm{1}_\Omega\dd{x}$ give a feasible partial-transport plan. The support bound \eqref{eq:projection-support} therefore identifies this minimization problem with \eqref{wass}: its active source density is $\mathbbm{1}_E$ and its active target density is $f\mathbbm{1}_\Omega$.

Since both original domains $B_r$ and $\Omega$ are strictly convex and both original densities are bounded above and away from zero, \cite[Theorems~4.8 and 4.10]{F10} yield interior strict convexity of the potentials and mutually inverse continuous gradient maps between the closed active regions. Here and below, $E$ is taken to be the canonical open active representative supplied by that result. In the present nested configuration the active target is all of $\Omega$. Thus,
\begin{equation}
\label{eq:boundary-homeomorphism}
S\colon \overline E\longrightarrow\overline\Omega,
\qquad
T\colon \overline\Omega\longrightarrow\overline E
\end{equation}
are mutually inverse homeomorphisms, with $S(E)=\Omega$ and $T(\Omega)=E$. This conclusion is obtained before any second-derivative boundary estimate. Moreover,
\begin{equation}
\label{eq:global-transport-extension}
\Theta\in C^1(\R^d),\qquad S(\R^d)\subset\overline\Omega,
\end{equation}
and $\Psi$ is $C^1$ on $\overline\Omega$ and strictly convex in $\Omega$.

In particular, for every Borel set compactly contained in $\Omega$, the measure of its gradient image equals its $f\dd{x}$-mass. Hence
\begin{equation}
\label{eq:alexandrov-equation}
\det(\grad^2\Psi)=f\qq{in} \Omega
\end{equation}
holds in the Alexandrov sense. Then, one may apply the interior $C^{2,\alpha}$ regularity theories in \cite[Theorem~1.1]{FJM16}.

On the other hand, Fenchel's inequality $\Psi(x)+\Theta(y)\ge x\cdot y$ now becomes
\begin{equation}
\varphi(y)\le u(x) + \frac12\abs*{x-y}^2 \qc x\in\overline\Omega,\quad y\in\R^d,
\label{comp phi u}
\end{equation}
and the equality holds at the optimal pairs $y = T(x) \Leftrightarrow x = S(y)$. Namely, at each differentiable point of $u$, one has
\begin{equation}
\label{eq:phi-transport-identity}
\varphi\circ T(x) = u(x) + \frac12\abs{x-T(x)}^2 = u(x) + \frac12\abs{\grad u(x)}^2 \eqqcolon  \Phi[u](x),
\end{equation}
which is the expression that must eventually vanish on $\pd\Omega$. Since $T(\Omega)=E$ and $\varphi\le0$ on the canonical open active region, one already has
\begin{equation*}
\Phi[u] \le 0 \qin\Omega \implies u\le 0 \qin \overline{\Omega}.
\end{equation*}

\subsection{Strict separation and identification of the active region}\label{sec:separation}

The Alexandrov equation \eqref{eq:alexandrov-equation} also implies the following useful viscosity inequality. Define
\begin{equation*}
c_\sigma \coloneqq  d\qty[(1+\sigma_0)^{1/d}-1]>0.
\end{equation*}
We claim that
\begin{equation}
\vb\Delta u \ge c_\sigma \qin \Omega.
\label{u vis}
\end{equation}
Indeed, the interior regularity already obtained gives $\grad^2\Psi>0$ and $\det\grad^2\Psi=f$ pointwise in $\Omega$. The arithmetic--geometric mean inequality gives
\begin{equation*}
\vb\Delta u=\operatorname{tr}(\grad^2\Psi)-d
\ge d\qty\big(f^{1/d}-1)\ge c_\sigma.
\end{equation*}
The $C^{2,\alpha}$ geometry of $\pd\Omega$ supplies a uniform interior tangent-ball radius $r_b>0$. Take $0<\eta<(1+\sigma_0)^{1/d}-1$ and $B_{r_b}(x_1)\subset\Omega$ touching $\pd\Omega$ at $\bar x_1$. Define
\begin{equation*}
w(x) \coloneqq  \frac\eta2 \qty(\abs{x-x_1}^2 - r_b^2).
\end{equation*}
Then, $\vb\Delta(u-w)>0$. Moreover, $u_{\restriction\pd B_{r_b}(x_1)} \le 0 = w_{\restriction\pd B_{r_b}(x_1)}$. The comparison principle implies $u\le w$ in $B_{r_b}(x_1)$. In particular, for $x_s \coloneqq  sx_1 + (1-s)\bar x_1$ with $0<s<1$, one has
\begin{equation*}
u(x_s) \le w(x_s) = \frac{\eta r_b^2}{2}\qty(s^2-2s).
\end{equation*}
It follows from \eqref{comp phi u} that
\begin{equation*}
\varphi(\bar x_1) \le u(x_s) + \frac12s^2r_b^2 \le \frac{\eta r_b^2}{2}\qty(s^2-2s) + \frac{r_b^2}{2} s^2 \le \frac{r_b^2}2\qty[(1+\eta)\qty(s-\frac\eta{1+\eta})^2-\frac{\eta^2}{1+\eta}].
\end{equation*}
Namely, there holds
\begin{equation*}
\varphi_{\restriction\pd\Omega} \le \frac{-r_b^2 \eta^2}{2(1+\eta)}.
\end{equation*}
The same conclusion holds uniformly throughout $\overline\Omega$. If $z$ is at distance at most $r_b$ from the boundary, a nearest boundary point and its interior tangent ball can be chosen so that $z=x_s$ for some $0\le s\le1$. Applying \eqref{comp phi u} with $x=x_{(\eta+s)/(1+\eta)}$ gives
\begin{equation*}
\varphi(z)\le
\frac{\eta r_b^2}{2(1+\eta)}\qty(s^2-2s-\eta)
\le-\frac{\eta^2r_b^2}{2(1+\eta)}.
\end{equation*}
If $z$ is at distance at least $r_b$ from the boundary, comparison on the ball of radius $r_b/2$ centred at $z$ gives $\varphi(z)\le u(z)\le-\eta r_b^2/8$. Consequently,
\begin{equation}
\label{eq:strict-separation-potential}
\varphi<-\eta_b\qq{on} \overline\Omega
\end{equation}
for a constant $\eta_b>0$ depending only on $\sigma_0$ and the geometry of $\Omega$.

Define $U\coloneqq \{\varphi<0\}$. Since $\Theta\in C^1(\R^d)$ and $S(\R^d)\subset\overline\Omega$, every point of $\{\varphi=0\}$ satisfies
\begin{equation*}
\grad\varphi(y)=y-S(y)\ne0;
\end{equation*}
the separation estimate excludes $y\in\overline\Omega$. Thus zero is a regular value of $\varphi$. Its zero set is locally a $C^1$ hypersurface, has zero Lebesgue measure, and is exactly $\pd U$. The normalized threshold identity \eqref{eq:normalized-threshold} therefore implies $\mathbbm{1}_U=\overline\chi$ almost everywhere. In particular, $U$ is bounded and $\overline\Omega\subset U$.

Since the open sets $E$ and $U$ agree almost everywhere, $\overline E=\overline U$. The $C^1$ regularity of $\pd U$ then gives $E\subset U$. By invariance of domain, the continuous injective map $S_{\restriction U}$ has open image in $\overline\Omega$, so $S(U)\subset\Omega=S(E)$. Injectivity yields $U\subset E$, and hence $E=U$. The homeomorphisms \eqref{eq:boundary-homeomorphism} therefore satisfy
\begin{equation}
	\label{eq:boundary-mapping}
	S(\pd U)=\pd\Omega,\qquad T(\pd\Omega)=\pd U.
\end{equation}

Let $L$ be the Lipschitz constant of $\varphi$ on a ball containing $\overline U$, controlled by
\eqref{eq:projection-support} and \eqref{eq:global-transport-extension}. Since $\varphi=0$
on $\pd U$, estimate \eqref{eq:strict-separation-potential} gives
\begin{equation}
	\label{eq:free-boundary-separation}
	\operatorname{dist}(\pd U,\overline\Omega)
	\ge\eta_b/L>0.
\end{equation}

\subsection{Boundary regularity of the transport potentials}\label{sec:transport-boundary}

By \eqref{eq:free-boundary-separation}, \cite[Theorem~1.1 and the remark]{CLW24} applies at every point of $\pd U$.
Moreover, \cite[\S\S~4--6]{CLW24} gives, for every $y_0\in\pd U$, a neighborhood on which $\Theta$ belongs to $C^{2,\alpha}$ up to $\pd U$ from the active side. It also gives $\pd U\in C^{2,\alpha}$ locally. Moreover, by taking the limit from the interior,
\begin{equation*}
\det(\grad^2\Theta(y_0))=\frac1{f(S(y_0))}>0.
\end{equation*}
Hence $\grad^2\Theta(y_0)$ is positive definite. By continuity the same is true with a positive lower bound in a smaller one-sided neighborhood.

Set $x_0=S(y_0)$. The local inverse theorem, applied after a $C^{2,\alpha}$ extension across the local $C^{2,\alpha}$ boundary of $U$, gives a $C^{1,\alpha}$ local inverse for $S$. The boundary homeomorphism in \eqref{eq:boundary-mapping} identifies this inverse with $T=\grad\Psi$. Thus
\begin{equation}
\label{eq:inverse-hessian}
\grad^2\Psi(x)=\qty[\grad^2\Theta(T(x))]^{-1}
\end{equation}
in a neighborhood of $x_0$ in $\overline\Omega$, and $\Psi$ belongs to $C^{2,\alpha}$ there. This is also the inverse step used at the beginning of \cite[Remark~4.4]{CLW24}. Since every point of $\pd\Omega$ is $S(y_0)$ for some $y_0\in\pd U$, a finite cover, together with the interior regularity already obtained, gives
\begin{equation}
\label{eq:source-boundary-regularity}
\Psi\in C^{2,\alpha}(\overline\Omega),\qquad
\grad^2\Psi>0\qq{on} \overline\Omega.
\end{equation}
Finally, Fenchel's equality and the boundary homeomorphism give
\begin{equation}
\label{eq:recovered-boundary-law}
\Phi[u]=\varphi\circ T=0\qq{on} \pd\Omega.
\end{equation}
Therefore $u=\Psi-\abs{x}^2/2$ is an elliptic $C^{2,\alpha}(\overline\Omega)$ solution of \eqref{MA static}.

\subsection{Higher regularity}\label{sec:static-bootstrap}

To obtain the higher regularity of $u$, one may first observe that
\begin{equation*}
\grad\Phi=\grad(u+\frac12\abs{\grad u}^2) = \grad u + \grad^2u(\grad u, \cdot) = \grad^2\Psi(\grad u, \cdot).
\end{equation*}
Set
\begin{equation*}
\mathsf{C} \coloneqq  \operatorname{cof}(\grad^2\Psi).
\end{equation*}
Then,
\begin{equation*}
\mathsf C \cdot \grad\Phi = f(\grad^2\Psi)^{-1}(\grad^2\Psi)\grad u = f \grad u,
\end{equation*}
and
\begin{equation}
\label{eq:phi-divergence}
\operatorname{div}(\mathsf C\grad\Phi)=\grad f\vdot\grad u + f\vb\Delta u \in C^{\alpha}(\overline\Omega).
\end{equation}
Piola's identity gives $\pd_i\mathsf C^{ij}=0$ in the distributional sense. Consider the zero-Dirichlet problem with operator $\mathsf C^{ij}\pd_i\pd_j$ and right-hand side $\grad f\vdot\grad u+f\vb\Delta u$. Since $\mathsf C\in C^\alpha$ is uniformly positive definite and $\pd\Omega\in C^{2,\alpha}$, this problem has a $C^{2,\alpha}$ solution by the classical Dirichlet theory \cite[\S~6.3]{GT01}. Piola's identity implies that this solution also satisfies the divergence-form equation \eqref{eq:phi-divergence}. Its difference from $\Phi$ has zero boundary values and solves the homogeneous divergence-form equation. Testing this equation by the difference and using positivity of $\mathsf C$ gives equality. Consequently, $\Phi\in C^{2,\alpha}(\overline\Omega)$.

Set
\begin{equation*}
\qty(a^{ij})\coloneqq (\grad^2\Psi)^{-1}.
\end{equation*}
Then
\begin{equation*}
a^{ij}\pd_i\pd_j\Phi = \vb\Delta u + \grad u\vdot\grad(\log f).
\end{equation*}
Moreover, $\grad\Phi=(\grad^2\Psi)\vdot\grad u \implies a[\grad(\log f), \grad\Phi]=\grad u\vdot\grad(\log f)$. Thus, one has
\begin{equation}
\label{eq:phi-elliptic}
\qty[a^{ij}\pd_i\pd_j - a^{ij}\pd_j(\log f)\pd_i]\Phi=\vb\Delta u=\tr(\grad^2\Psi)-d.
\end{equation}
The AM-GM inequality implies that $\vb*\Delta u\ge c_\sigma > 0$. Thus, the fact that $\Phi_{\restriction\pd\Omega}=0$ and the Hopf lemma yield
\begin{equation}
\label{eq:hopf-normal}
(\grad\Phi)_{\restriction\pd\Omega}=\kappa\vb n
\end{equation}
for some positive function $\kappa\colon  \pd\Omega\to\R_+$. Thus, it holds that
\begin{equation}
\label{eq:static-obliqueness}
\grad_{\vb n}u=\kappa(\grad^2\Psi)^{-1}[\vb n, \vb n] > 0 \qq{on} \pd\Omega,
\end{equation}
which means that $u$ satisfies the Taylor sign condition.

In order to lift the regularity of $u$ to $C^{3,\alpha}$, the interior Schauder estimate gives $\pd_k u\in C^{2,\alpha}_{\mathrm{loc}}(\Omega)$ and justifies
\begin{equation}
\label{eq:differentiated-interior}
a^{ij}\pd_i\pd_j\pd_k u=\pd_k(\log f)\in C^\alpha(\overline\Omega).
\end{equation}
Moreover, the fact that $\pd_k u\in C^{1,\alpha}(\overline\Omega)$ and the identity defining $\Phi$ imply
\begin{equation}
\label{eq:differentiated-boundary}
\pd_k u+\grad u\vdot\grad(\pd_k u)=\pd_k\Phi\in C^{1,\alpha}(\pd\Omega).
\end{equation}
The linear problem with these interior and boundary data has a $C^{2,\alpha}(\overline\Omega)$ solution by the oblique theory \cite[\S~6.7]{GT01}. Its boundary vector is $\grad u$, whose outer normal component is positive by the preceding Hopf argument. The difference between this solution and $\pd_k u$ belongs to $C^1(\overline\Omega)\cap C^2(\Omega)$ and solves the homogeneous problem. The maximum principle and the positive boundary zeroth-order term imply that the difference vanishes. Thus $\pd_k u\in C^{2,\alpha}(\overline\Omega)$ for each $k$, and $u\in C^{3,\alpha}(\overline\Omega)$.

Uniqueness in the admissible $C^2(\overline\Omega)$ class follows from linearization and the maximum principle.

Uniformity in \eqref{eq:static-theorem-bounds} follows by compactness of bounded data families with a fixed density gap and nondegenerate geometry.

Finally, $T=\operatorname{Id}+\grad u$ is a $C^{2,\alpha}$ diffeomorphism of the closures transporting $f\mathbbm{1}_\Omega\dd{x}$ to $\mathbbm{1}_U\dd{y}$. For a material embedding $\vb X$ with $(f\circ\vb X)J_{\vb X}=\rho_0$, the map $T\circ\vb X$ satisfies $\det\grad(T\circ\vb X)=\rho_0$ and attains the density-cap minimum. It therefore realizes the minimizing incompressible comparison configuration in \S\ref{sec:geometric-motivation} within the $C^{2,\alpha}$ embedding class.

\section{Well-posedness of Dynamical Problems}\label{sec:dynamics}

We now prove Theorems~\ref{thm:existence} and~\ref{thm:stability}. In Lagrangian coordinates, the unknowns are the flow map $\vb X$ with $\vb X(0)=\operatorname{Id}$ and the pulled-back velocity $\underline{\vb v}=\vb v\circ\vb X$. We first specify a region of configuration space on which the static force is defined, then construct a motion that stays in that region.

Choose $0 < R \ll 1$ so that every map with
\begin{equation}
\label{eq:configuration-ball}
\norm{\vb X-\operatorname{Id}}_{C^{2,\alpha}(\overline\Omega_0)}<R
\end{equation}
is an orientation-preserving diffeomorphism onto a uniformly convex domain, with common $C^{2,\alpha}$ geometry and uniform bounds for the map and its inverse. This follows directly from $C^1$-smallness for injectivity on the convex domain $\Omega_0$ and $C^2$-smallness for strict convexity of the boundary. Reduce $R$ if necessary so that it also holds that
\begin{equation}
\label{eq:configuration-density-gap}
\frac{\rho_0}{J_{\vb X}}\ge1+\frac{\sigma_0}{2}
\qq{on} \overline\Omega_0 \qand
J_{\vb X}\coloneqq \det\grad\vb X.
\end{equation}
Denote by $\mathscr X_*$ for this open $C^{2,\alpha}$ ball of radius $R/2$.

For a material motion, the continuity equation implies
\begin{equation}
\label{eq:material-density-recovery}
\pd_t(\rho_t\circ\vb X_t \cdot J_{\vb X_t}) = 0 \implies \rho_t = (\rho_0/J_{\vb X_t})\circ \vb X_t^{-1}.
\end{equation}
Thus, for $\rho_0 \in C^{1, \alpha}(\overline\Omega_0)$ and $\vb X\in \mathscr X_*$, one has $\rho \coloneqq  (\rho_0/J_{\vb X})\circ \vb X^{-1} \in C^{1, \alpha}(\overline\Omega)$. In particular, there exists a unique solution $q\in C^{3, \alpha}(\overline\Omega)$ to the static Monge--Ampère problem with nonlinear Legendre-dual boundary law
\begin{equation*}
\begin{cases*}
\det(I+\delta^2\grad^2 q) = \rho &in $\Omega$\\[1ex]
q+\frac{\delta^2}2\abs{\grad q}^2 = 0 &on $\pd\Omega$.
\end{cases*}
\end{equation*}
Set
\begin{equation}
\label{eq:material-force}
\vb f[\vb X; \rho_0] \coloneqq  (\grad q[\vb X; \rho_0])\circ \vb X.
\end{equation}
Then $\vb f$ takes values in $C^{2,\alpha}(\overline\Omega_0;\R^d)$. On the restricted neighbourhood $\mathscr X_*$, the static estimate gives
\begin{equation}
\label{eq:uniform-force}
\sup_{\vb X\in \mathscr X_*} \norm{\vb f[\vb X; \rho_0]}_{C^{2, \alpha}(\overline\Omega_0)}\le C\qty(\Omega_0,R,\norm*{\rho_0}_{C^{1,\alpha}},\sigma_0^{-1},\alpha,\delta,d).
\end{equation}

\subsection{Technical Preliminaries}\label{sec:force-compactness}

\begin{lemma}[Compactness of the material force]\label{lem:force-compactness}
Suppose that $\mathscr X_* \ni \vb X_n \to \vb X \in \mathscr X_*$ in $C^2(\overline\Omega_0)$ as $n\to\infty$. Then,
\begin{equation*}
\vb f[\vb X_n; \rho_0] \to \vb f[\vb X; \rho_0] \qin C^{2, \beta}(\overline\Omega_0) \qc \forall\, 0<\beta<\alpha.
\end{equation*}
\end{lemma}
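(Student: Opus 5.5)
The plan is a compactness-plus-uniqueness argument: uniform $C^{2,\alpha}$ bounds on the forces give subsequential convergence in $C^{2,\beta}$, and uniqueness of the static solution identifies every limit.

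\textbf{Uniform bounds and a convergent subsequence.} By \eqref{eq:uniform-force}, the forces $\vb f_n\coloneqq\vb f[\vb X_n;\rho_0]$ are bounded in $C^{2,\alpha}(\overline\Omega_0)$ uniformly in $n$. By Arzelà--Ascoli and interpolation of Hölder norms, every subsequence has a further subsequence converging in $C^{2,\beta}(\overline\Omega_0)$ for each $\beta<\alpha$. It therefore suffices to show that every such subsequential limit equals $\vb f[\vb X;\rho_0]$; the whole sequence then converges.

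\textbf{Passing to the limit in the static problem.} To identify the limit, I pull everything back to $\overline\Omega_0$. Set $\underline q_n\coloneqq q[\vb X_n;\rho_0]\circ\vb X_n$ and $\Omega_n\coloneqq\vb X_n(\Omega_0)$. By Theorem~\ref{thm:static} and the uniform geometry of $\mathscr X_*$, the sequence $\norm{q_n}_{C^{3,\alpha}(\overline\Omega_n)}$ is uniformly bounded. Chain-rule bounds then show that $\underline q_n$ is bounded in $C^{2,\alpha}(\overline\Omega_0)$, since $\vb X_n$ is bounded in $C^{2,\alpha}$. To avoid needing third derivatives of $\vb X_n$, I work with $\underline q_n$ and the pulled-back gradient $\vb f_n=(\grad q_n)\circ\vb X_n$, which is bounded in $C^{2,\alpha}$.

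Pass to a subsequence with $\underline q_n\to\underline q$ in $C^{2,\beta}$ and $\vb f_n\to\vb f_\infty$ in $C^{2,\beta}$. Since $\vb X_n\to\vb X$ in $C^2$, we have $\grad\vb X_n^{-1}\circ\vb X_n=(\grad\vb X_n)^{-1}\to(\grad\vb X)^{-1}$ in $C^1$. The identity $\grad\underline q_n=(\grad\vb X_n)^{\mathsf T}\vb f_n$ then passes to the limit, and so does the second-derivative formula for the Hessian, $\grad^2q_n\circ\vb X_n=(\grad\vb X_n)^{-\mathsf T}\,\grad\vb f_n$ (using $\grad(\grad q_n\circ\vb X_n)=(\grad^2 q_n\circ\vb X_n)\grad\vb X_n$). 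The densities satisfy $\rho_n\circ\vb X_n=\rho_0/J_{\vb X_n}\to\rho_0/J_{\vb X}$ uniformly.

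Define $q\coloneqq\underline q\circ\vb X^{-1}$ on $\Omega\coloneqq\vb X(\Omega_0)$. Then $q\in C^{2,\beta}(\overline\Omega)$, $\grad q\circ\vb X=\vb f_\infty$, and the pulled-back equation
\begin{equation*}
\det\bigl(I+\delta^2(\grad\vb X_n)^{-\mathsf T}\grad\vb f_n\bigr)=\rho_0/J_{\vb X_n}
\end{equation*}
holds uniformly on $\overline\Omega_0$ in the limit. The boundary law $\underline q_n+\tfrac{\delta^2}{2}\abs{\vb f_n}^2=0$ on $\pd\Omega_0$ also passes to the limit. Ellipticity is preserved with a uniform bound, since $C_*^{-1}I\le I+\delta^2\grad^2q_n\le C_*I$ holds uniformly by Theorem~\ref{thm:static}. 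Hence $q$ is an elliptic $C^2(\overline\Omega)$ classical solution of the static problem with density $\rho=(\rho_0/J_{\vb X})\circ\vb X^{-1}$.

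\textbf{Identification via uniqueness.} The uniqueness part of Theorem~\ref{thm:static} (uniqueness in the admissible $C^2(\overline\Omega)$ class) gives $q=q[\vb X;\rho_0]$, so $\vb f_\infty=\vb f[\vb X;\rho_0]$. Combined with the first step, this proves convergence of the full sequence.

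\textbf{Main obstacle.} The delicate point is keeping everything on the fixed domain $\overline\Omega_0$, since $\vb X_n$ converges only in $C^2$. I expect the main care to be needed in showing that the Hessian identity and the determinant equation pass to the limit using only $C^1$ convergence of $\vb f_n$ and of $(\grad\vb X_n)^{-1}$. I also need the limiting domains $\Omega=\vb X(\Omega_0)$ to lie in the class where uniqueness applies; this holds because $\vb X\in\mathscr X_*$.
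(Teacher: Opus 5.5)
Your proposal is correct and is essentially the paper's argument: uniform $C^{2,\alpha}$ bounds for $\underline q_n$ and the forces on the fixed domain $\overline\Omega_0$, passage to the limit in the pulled-back determinant equation and boundary law, preservation of the elliptic branch, static uniqueness, and the subsequence principle. Your Hessian identity through $\grad\vb f_n$ avoids the $\grad^2\vb X_n$ terms in the paper's formula, but with the chain rule you state it should read $\grad\vb f_n(\grad\vb X_n)^{-1}$.

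The paper also carries the obliqueness margin $\grad_{\vb n_n}q_n\ge\theta$ to the limit before invoking uniqueness. This is because, for two static solutions $u_1,u_2$, the linearized boundary condition $w+\tfrac12(\grad u_1+\grad u_2)\vdot\grad w=0$ needs $\grad_{\vb n}(u_1+u_2)\ge0$ for the maximum principle. You should state this step too; for your $C^{3,\beta}$ limit it also follows from the Hopf argument of \S\ref{sec:static-bootstrap}.
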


\begin{proof}

Set
\begin{equation*}
\underline q_n \coloneqq  (q[\vb X_n; \rho_0])\circ\vb X_n \qand \vb F_n \coloneqq  \vb f[\vb X_n; \rho_0].
\end{equation*}
Then, it follows that
\begin{equation}
\label{eq:force-pullback}
\vb F_n = (\grad\vb X_n)^{-T}\grad\underline q_n
\end{equation}
and
\begin{equation}
\label{eq:hessian-pullback}
\qty(\grad^2q[\vb X_n; \rho_0])\circ\vb X_n = (\grad\vb X_n)^{-T}\vdot\qty(\grad^2\underline q_n-\sum_{1\le j \le d}F_n^j\grad^2 X_n^j)\vdot(\grad\vb X_n)^{-1}.
\end{equation}
The boundedness of $\vb X_n$ in $C^{2, \alpha}(\overline\Omega_0)$ and the static elliptic theories yield the boundedness of $\underline q_n \in C^{2, \alpha}(\overline\Omega_0) \Subset C^{2}(\overline\Omega_0)$. Namely, after passing to a subsequence, there exists a function $\underline q_\infty\in C^{2, \alpha}(\overline\Omega_0)$ so that
\begin{equation*}
\underline q_n \to \underline q_\infty \qin C^2(\overline\Omega_0).
\end{equation*}
Moreover, by definition, there holds
\begin{equation*}
\det\qty[I+\delta^2(\grad\vb X_n)^{-T}\vdot\qty(\grad^2\underline q_n-\sum_{1\le j \le d}F_n^j\grad^2 X_n^j)\vdot(\grad\vb X_n)^{-1}] = \frac{\rho_0}{J_{\vb X_n}}.
\end{equation*}
Taking $n\to\infty$, the $C^2$-convergence $\vb X_n\to \vb X$ implies that
\begin{equation*}
\det(I+\delta^2\grad^2q_\infty) = (\rho_0/J_{\vb X})\circ\vb X^{-1}, \qq{where} q_\infty\coloneqq \underline q_\infty\circ\vb X^{-1}.
\end{equation*}
Concerning the boundary data, set $\vb F_\infty\coloneqq (\grad\vb X)^{-T}\grad\underline q_\infty$. The $C^2$-convergence gives
\begin{equation*}
\underline q_\infty +\frac{\delta^2}2\abs{\vb F_\infty}^2 = 0 \qq{on}  \pd\Omega_0 \implies q_\infty + \frac{\delta^2}2\abs{\grad q_\infty}^2 = 0 \qq{on}  \vb X(\pd\Omega_0).
\end{equation*}
To invoke static uniqueness, the limiting branch must also be identified. The data-uniform bounds \eqref{eq:static-theorem-bounds} give constants $\lambda,\theta>0$ such that
\begin{equation}
\label{eq:compactness-margins}
I+\delta^2\grad^2q[\vb X_n;\rho_0]\ge\lambda I,\qquad
\grad_{\vb n_n}q[\vb X_n;\rho_0]\ge\theta.
\end{equation}
The Hessian pullback formula passes the first inequality to the limit. For the second, if $\vb n_0$ is the outward normal of $\Omega_0$, then
\begin{equation}
\label{eq:normal-pullback}
\vb n_n\circ\vb X_n
=\frac{(\grad\vb X_n)^{-T}\vb n_0}{\abs{(\grad\vb X_n)^{-T}\vb n_0}}
\end{equation}
converges uniformly to the corresponding normal for $\vb X$. Thus $q_\infty$ has the same strict ellipticity and obliqueness margins. Uniqueness in Theorem~\ref{thm:static}, which already holds in the admissible $C^{2,\alpha}$ class, now yields
\begin{equation*}
q_\infty = q[\vb X; \rho_0] \qand \vb F_n \to \vb f[\vb X;\rho_0] \qq{in}  C^1(\overline\Omega_0).
\end{equation*}
Every subsequence has the same possible limit, so the convergence holds for the full sequence. The uniform $C^{2,\alpha}$ bound for $\vb F_n$ and interpolation yield convergence in $C^{2,\beta}$ for each $\beta<\alpha$.

\end{proof}

\subsection{Construction of Solutions}\label{sec:existence}

Set $\vb X_0^h=\operatorname{Id}$ and $\underline{\vb v}_0^h=\vb v_0$. Given a time step $h>0$, write $t_k=kh$. Once $\vb X_k^h\in\mathscr X_*$ has been constructed, put
\begin{equation*}
\vb F_k^h\coloneqq \vb f[\vb X_k^h;\rho_0].
\end{equation*}
For $t=t_k+\tau$, $0\le\tau\le h$, define
\begin{equation}
\label{eq:polygonal-step}
\underline{\vb v}^h(t)\coloneqq \underline{\vb v}_k^h+\tau\vb F_k^h \qand
\vb X^h(t) \coloneqq \vb X_k^h+\tau\underline{\vb v}_k^h+\frac{\tau^2}{2}\vb F_k^h.
\end{equation}
At the right endpoint these values define the next step. Thus $\vb X^h$ is continuously differentiable and
\begin{equation*}
\pd_t\vb X^h=\underline{\vb v}^h,\qquad
\pd_t\underline{\vb v}^h=\vb f[\vb X_-^h;\rho_0]\qq{a.e.,}
\vb X_-^h(t)\coloneqq \vb X_k^h\quad(t_k\le t<t_{k+1}).
\end{equation*}
Let
\begin{equation}
M\ge\sup_{\vb X\in\mathscr X_*}
\norm{\vb f[\vb X;\rho_0]}_{C^{2,\alpha}(\overline\Omega_0)}.
\end{equation}
This is a data-dependent constant, with the map and geometry bounds included as specified above. Choose $T_*>0$ so that
\begin{equation}
\label{eq:lifespan-choice}
T_*\norm{\vb v_0}_{C^{2,\alpha}}+\frac M2T_*^2<R/2.
\end{equation}
As long as the previous grid points lie in $\mathscr X_*$, integration of the defining equations gives
\begin{equation*}
\norm{\underline{\vb v}^h(t)-\vb v_0}_{C^{2,\alpha}}\le Mt,
\qquad
\norm{\vb X^h(t)-\operatorname{Id}}_{C^{2,\alpha}}
\le t\norm{\vb v_0}_{C^{2,\alpha}}+\frac M2t^2<R/2.
\end{equation*}
Induction therefore constructs every step up to $T_*$, including all intermediate maps; the last step is truncated at $T_*$. In particular, the force is never evaluated outside its domain of definition. Additionally, there holds
\begin{equation}
\begin{split}
\norm{\underline{\vb v}^h(t)-\underline{\vb v}^h(s)}_{C^{2,\alpha}}
\le M\abs{t-s} \qand
\norm{\vb X^h(t)-\vb X^h(s)}_{C^{2,\alpha}}
\le\qty\big(\norm{\vb v_0}_{C^{2,\alpha}}+MT_*)\abs{t-s}.
\end{split}
\label{Lip X v h}
\end{equation}

The compact embeddings $C^{2,\alpha}\Subset C^{2,\beta}$ and the Arzelà--Ascoli theorem yield, along a subsequence $h\downarrow0$,
\begin{equation}
\vb X^h\longrightarrow\vb X,\qquad
\underline{\vb v}^h\longrightarrow\underline{\vb v}
\qq{in} C\qty([0,T_*];C^{2,\beta}(\overline\Omega_0))
\quad(0<\beta<\alpha).
\label{conv X v}
\end{equation}
A diagonal choice, or interpolation from one lower exponent, gives the assertion for every $\beta<\alpha$. Lower semicontinuity of Hölder seminorms retains the uniform $C^{2,\alpha}$ bounds and both Lipschitz estimates in \eqref{Lip X v h}. In particular, $\underline{\vb v}$ is Lipschitz in time with values in $C^{2,\alpha}$, and $\vb X(t)\in\mathscr X_*$ by \eqref{eq:lifespan-choice}.

The frozen and interpolated configurations satisfy the explicit estimate
\begin{equation}
\label{eq:frozen-map-error}
\sup_{0\le t\le T_*}\norm{\vb X_-^h(t)-\vb X^h(t)}_{C^{2,\alpha}}
\le h\qty\big(\norm{\vb v_0}_{C^{2,\alpha}}+MT_*).
\end{equation}
Lemma~\ref{lem:force-compactness} therefore gives
\begin{equation*}
\sup_{0\le t\le T_*}
\norm{\vb f[\vb X_-^h(t);\rho_0]-\vb f[\vb X(t);\rho_0]}_{C^{2,\beta}}
\longrightarrow0 \qas h\to 0.
\end{equation*}
For completeness, uniformity in $t$ follows by contradiction: a violating sequence of times has a convergent subsequence, and the configurations at those times converge in $C^2$ to the same limiting configuration. Lemma~\ref{lem:force-compactness} then applies to both sequences.

Passing to the limit in the integral equations gives
\begin{equation}
\vb X(t)=\operatorname{Id}+\int_0^t\underline{\vb v}(s)\dd{s},\qquad
\underline{\vb v}(t)=\vb v_0+\int_0^t\vb f[\vb X(s);\rho_0]\dd{s}
\label{int formula}
\end{equation}
in $C^{2,\beta}$ for every $\beta<\alpha$. The force is continuous in time in these spaces, again by Lemma~\ref{lem:force-compactness}, so
\begin{equation*}
\vb X\in C_t^2C_x^{2,\beta},\qquad
\underline{\vb v}\in C_t^1C_x^{2,\beta}.
\end{equation*}
To obtain the asserted endpoint time regularity, one can use the first integral identity. Since $\underline{\vb v}$ is Lipschitz with values in $C^{2,\alpha}$, its integral exists in this Banach space and is continuously differentiable there. The first equality in \eqref{int formula}, already valid in $C^{2,\beta}$, identifies that Banach-space integral with $\vb X$. Hence
\begin{equation*}
\vb X\in C_t^{1,1}C_x^{2,\alpha},\qquad
\underline{\vb v}\in C_t^{0,1}C_x^{2,\alpha}.
\end{equation*}

Define, for $x\in\Omega_t\coloneqq \vb X_t(\Omega_0)$,
\begin{equation*}
\rho(t,x)\coloneqq \qty(\frac{\rho_0}{J_{\vb X_t}})\circ\vb X_t^{-1}(x),\qquad
\vb v(t,x)\coloneqq \underline{\vb v}\qty(t,\vb X_t^{-1}(x)),\qquad
q(t,\cdot)\coloneqq q[\vb X_t;\rho_0].
\end{equation*}
The identity $\rho_t\circ\vb X_t\,J_{\vb X_t}=\rho_0$, together with
\begin{equation*}
\pd_t J_{\vb X_t}=J_{\vb X_t}\,(\operatorname{div}\vb v_t)\circ\vb X_t,
\end{equation*}
gives the continuity equation. The second identity in \eqref{int formula} gives $\pd_t\vb v+(\vb v\vdot\grad)\vb v=\grad q$. Boundary particles are transported by $\vb X_t$, so their normal speed is $\vb v\vdot\vb n$. Finally, the determinant and nonlinear boundary equations hold by the definition of the static solver. Composition estimates and Theorem~\ref{thm:static} give all the spatial bounds in Theorem~\ref{thm:existence}, with the common positive margins already enforced by the configuration ball. This proves Theorem~\ref{thm:existence}.

\subsection{Uniqueness of Solutions and Continuous Dependence on Initial Data}\label{sec:stability}

\subsubsection{Geometric Settings}\label{sec:geometry}

Fix a bounded, uniformly convex reference domain $\Omega_*$ with $\pd\Omega_*\in C^{2,\alpha}$. Choose a smooth ambient vector field $\vb*\nu_*$ transverse to $\pd\Omega_*$, for example a sufficiently close smooth approximation to its continuous outward normal. In a fixed tubular neighborhood, every sufficiently $C^1$-close boundary has a unique representation by a function $\eta_\Omega\colon \pd\Omega_*\to\R$ such that
\begin{equation}
	\label{eq:boundary-graph}
	\begin{split}
		\phi_\Omega \colon  \pd\Omega_* &\to \pd\Omega \\
		z &\mapsto z + \eta_\Omega(z)\vb*\nu_*(z)
	\end{split}
\end{equation}
is a boundary diffeomorphism. Define its harmonic extension by
\begin{equation}
	\label{eq:harmonic-coordinates}
	\begin{cases}
		\vb\Delta\mathcal{Y}_\Omega = 0 \qin \Omega_*,\\
		\mathcal Y_\Omega = \phi_\Omega \qq{on} \pd\Omega_*.
	\end{cases}
\end{equation}
The fixed transverse field and graph convention make $\mathcal Y_\Omega$ a function of the domain alone, independent of material relabelling. The Dirichlet Schauder estimate gives
\begin{equation}
	\label{eq:harmonic-estimate}
	\norm{\mathcal Y_\Omega - \operatorname{Id}}_{C^{2, \alpha}(\overline\Omega_*)} \lesssim_{\Omega_*} \abs{\phi_\Omega-\operatorname{Id}}_{C^{2, \alpha}(\pd\Omega_*)}.
\end{equation}
Taking a sufficiently small constant $0<\lambda_0\ll 1$, we denote by $\Lambda_* = \Lambda_*(\Omega_*, \lambda_0, \alpha)$ the collection of domains satisfying
\begin{equation}
	\label{eq:domain-neighbourhood}
	\abs{\phi_\Omega-\operatorname{Id}}_{C^{2, \alpha}(\pd\Omega_*)} \le \lambda_0.
\end{equation}
Then, for suitably small $\lambda_0$, the extended coordinate map $\mathcal Y_\Omega$ is a $C^{2, \alpha}$-diffeomorphism from $\Omega_*$ to $\Omega$, and the domains in $\Lambda_*$ are uniformly convex with common geometric bounds.

The dynamics is compared on the reference domain through
\begin{equation*}
\vb Y\coloneqq \vb X\circ\mathcal Y_{\Omega_0}.
\end{equation*}
Thus solutions emanating from different initial domains are represented on the same domain $\Omega_*$. The response of the static potential to a change of both the domain and the density is considered first.

Suppose that $s\mapsto\vb Y_s$ is $C^1$ with values in $C^{2,\alpha}(\overline\Omega_*;\R^d)$, and that $s\mapsto\rho_s\circ\vb Y_s$ is $C^1$ with values in $C^{1,\alpha}(\overline\Omega_*)$. Set $\Omega_s\coloneqq \vb Y_s(\Omega_*)$. The maps are assumed to be orientation-preserving diffeomorphisms, with uniform $C^{2,\alpha}$ bounds for the maps and their inverses. The domains are uniformly convex with common $C^{2,\alpha}$ geometric bounds.

Let $q_s\in C^{3,\alpha}(\overline\Omega_s)$ be the solution given by Theorem~\ref{thm:static}:
\begin{equation*}
\begin{cases}
\det(I+\delta^2\grad^2q_s)=\rho_s \qin\Omega_s,\\[1ex]
q_s+\dfrac{\delta^2}{2}\abs{\grad q_s}^2=0 \qq{on}\pd\Omega_s.
\end{cases}
\end{equation*}

\subsubsection{Dependence on the parameter}\label{sec:parameter-dependence}

First observe that
\begin{equation*}
(\grad q_s)\circ\vb Y_s
=(\grad\vb Y_s)^{-T}\grad(q_s\circ\vb Y_s)
\end{equation*}
and
\begin{equation}
\label{eq:parameter-hessian-pullback}
(\grad^2q_s)\circ\vb Y_s
=(\grad\vb Y_s)^{-T}
\qty[\grad^2(q_s\circ\vb Y_s)
-\sum_{j=1}^d\qty((\grad q_s)\circ\vb Y_s)^j\grad^2Y_s^j]\qty(\grad\vb Y_s)^{-1}.
\end{equation}
After substitution, the two equations define a continuously differentiable nonlinear operator from $C^{2,\alpha}(\overline\Omega_*)$ into
\begin{equation*}
C^\alpha(\overline\Omega_*)\times C^{1,\alpha}(\pd\Omega_*),
\end{equation*}
with parameters $\vb Y_s$ and $\rho_s\circ\vb Y_s$.  At a fixed state, the derivative with respect to the potential is the pullback of
\begin{equation*}
\qty(
\delta^2\operatorname{cof}(I+\delta^2\grad^2q_s)\mathbin{\vb{\colon}}\grad^2,
\quad 1+\delta^2\grad q_s\vdot\grad
).
\end{equation*}
This is an isomorphism from $C^{2,\alpha}(\overline\Omega_s)$ to $C^\alpha(\overline\Omega_s)\times C^{1,\alpha}(\pd\Omega_s)$ by the classical oblique theory \cite[\S~6.7]{GT01}. The implicit function theorem therefore gives
\begin{equation}
\label{eq:parameter-differentiability}
s\longmapsto q_s\circ\vb Y_s
\quad \text{of class } C^1 \text{ in } C^{2,\alpha}(\overline\Omega_*).
\end{equation}
Moreover, $s\mapsto(\grad q_s)\circ\vb Y_s$ is $C^1$ in $C^{1,\alpha}(\overline\Omega_*)$.

\subsubsection{The linearized response}\label{sec:linearized-response}

Set
\begin{equation*}
\mathfrak T_s \coloneqq  x + \delta^2\grad q_s(x).
\end{equation*}
The chain rule gives
\begin{equation*}
\grad(\mathfrak T_s\circ\vb Y_s)
=
\qty[(I+\delta^2\grad^2q_s)\circ\vb Y_s]\grad\vb Y_s.
\end{equation*}
Consequently, the Monge--Ampère equation yields
\begin{equation*}
\begin{split}
\det\grad(\mathfrak T_s\circ\vb Y_s)
&=
\qty[\det(I+\delta^2\grad^2q_s)\circ\vb Y_s]\det(\grad\vb Y_s)
\\
&=(\rho_s\circ\vb Y_s)\det(\grad\vb Y_s)
\eqqcolon \xi_s.
\end{split}
\end{equation*}
We define
\begin{equation}
\label{eq:response-h}
\dot{\vb Y}_s\coloneqq \dv{s}\vb Y_s
\qand
h_s\coloneqq \delta^2\qty[
(\grad q_s)\circ\vb Y_s\vdot\dot{\vb Y}_s
-\dv{s}(q_s\circ\vb Y_s)
].
\end{equation}
This definition is made entirely on $\Omega_*$. Differentiating
\begin{equation*}
\grad(q_s\circ\vb Y_s)
=(\grad\vb Y_s)^T[(\grad q_s)\circ\vb Y_s]
\end{equation*}
with respect to $s$ and using the definition of $h_s$, the terms containing $\grad\dot{\vb Y}_s$ cancel, giving
\begin{equation*}
\grad h_s
=
\delta^2(\grad\vb Y_s)^T
\qty[
(\grad^2q_s)\circ\vb Y_s\vdot\dot{\vb Y}_s
-\dv{s}\qty[(\grad q_s)\circ\vb Y_s]
].
\end{equation*}
Hence,
\begin{equation}
\label{eq:force-derivative-identity}
\dv{s}\qty[(\grad q_s)\circ\vb Y_s]
=
(\grad^2q_s)\circ\vb Y_s\vdot\dot{\vb Y}_s
-\delta^{-2}(\grad\vb Y_s)^{-T}\grad h_s,
\end{equation}
and therefore
\begin{equation*}
\begin{split}
\dv{s}(\mathfrak T_s\circ\vb Y_s)
&=
\dot{\vb Y}_s+\delta^2\dv{s}\qty[(\grad q_s)\circ\vb Y_s]
\\
&=
\qty[(I+\delta^2\grad^2q_s)\circ\vb Y_s]\dot{\vb Y}_s
-(\grad\vb Y_s)^{-T}\grad h_s.
\end{split}
\end{equation*}
Jacobi's formula and Piola's identity, applied to the Jacobian of the map $\mathfrak T_s\circ\vb Y_s$, yield
\begin{equation}
\label{eq:jacobi-piola-response}
\begin{split}
\dot\xi_s
&=
\operatorname{cof}\grad(\mathfrak T_s\circ\vb Y_s)
\mathbin{\vb{\colon}}\grad\dv{s}(\mathfrak T_s\circ\vb Y_s)
\\
&=
\operatorname{div}\qty{
\qty[\operatorname{cof}\grad(\mathfrak T_s\circ\vb Y_s)]^T
\dv{s}(\mathfrak T_s\circ\vb Y_s)
}.
\end{split}
\end{equation}
Moreover,
\begin{equation*}
\begin{split}
\qty[\operatorname{cof}\grad(\mathfrak T_s\circ\vb Y_s)]^T
&=
\det\grad(\mathfrak T_s\circ\vb Y_s)
\qty[\grad(\mathfrak T_s\circ\vb Y_s)]^{-1}
\\
&=
\xi_s(\grad\vb Y_s)^{-1}
\qty[(I+\delta^2\grad^2q_s)^{-1}\circ\vb Y_s].
\end{split}
\end{equation*}
For simplicity of notation, set
\begin{equation}
\label{eq:response-coefficients}
P_s\coloneqq \xi_s(\grad\vb Y_s)^{-1}
\qty\big[(I+\delta^2\grad^2q_s)^{-1}\circ\vb Y_s]
(\grad\vb Y_s)^{-T} \qand
G_s\coloneqq \xi_s(\grad\vb Y_s)^{-1}\dot{\vb Y}_s.
\end{equation}
Here $P_s$ is symmetric and uniformly positive definite. The previous calculation is exactly
\begin{equation}
\operatorname{div}(P_s\grad h_s)=\operatorname{div}G_s-\dot\xi_s
\qq{in} \Omega_*.
\label{response interior}
\end{equation}

Differentiating the pulled-back boundary identity yields
\begin{equation*}
\begin{split}
0&=\dv{s}\qty[\qty(q_s+\frac{\delta^2}{2}\abs{\grad q_s}^2)\circ\vb Y_s]\\
&=\qty\big[(I+\delta^2\grad^2q_s)\grad q_s]\circ\vb Y_s\vdot\dot{\vb Y}_s
-\delta^{-2}h_s
-\qty\big[(\grad q_s)\circ\vb Y_s]\vdot(\grad\vb Y_s)^{-T}\grad h_s.
\end{split}
\end{equation*}
Consequently,
\begin{equation}
\qty\big[(\grad q_s)\circ\vb Y_s]\vdot(\grad\vb Y_s)^{-T}\grad h_s
-\qty\big[(I+\delta^2\grad^2q_s)\grad q_s]\circ\vb Y_s\vdot\dot{\vb Y}_s
=-\delta^{-2}h_s
\qq{on} \pd\Omega_*.
\label{response oblique}
\end{equation}
This is the direct boundary derivative. Its conversion into a conormal condition uses the geometry of the original nonlinear law. Let $\vb n_s$ and $\vb N$ be the outward unit normals to $\pd\Omega_s$ and $\pd\Omega_*$, respectively. As in the static argument, there is a positive function $\kappa_s$ on $\pd\Omega_s$ such that
\begin{equation}
\label{eq:response-normal}
(I+\delta^2\grad^2q_s)\grad q_s=\kappa_s\vb n_s,\qquad
\kappa_s=
\frac{(I+\delta^2\grad^2q_s)(\grad q_s,\grad q_s)}{\grad_{\vb n_s}q_s}>0.
\end{equation}
The normal transformation is
\begin{equation*}
\vb n_s\circ\vb Y_s
=\frac{(\grad\vb Y_s)^{-T}\vb N}{\abs{(\grad\vb Y_s)^{-T}\vb N}}.
\end{equation*}
It follows, using symmetry of the inverse Hessian matrix, that
\begin{equation*}
\begin{split}
&(P_s\grad h_s-G_s)\vdot\vb N\\
&\quad=\frac{\xi_s\abs{(\grad\vb Y_s)^{-T}\vb N}}{\kappa_s\circ\vb Y_s}
\Biggl\{
\qty\big[(\grad q_s)\circ\vb Y_s]\vdot(\grad\vb Y_s)^{-T}\grad h_s-\qty\big[(I+\delta^2\grad^2q_s)\grad q_s]\circ\vb Y_s\vdot\dot{\vb Y}_s
\Biggr\}.
\end{split}
\end{equation*}
Substitution of \eqref{response oblique} yields the signed flux condition
\begin{equation}
(P_s\grad h_s-G_s)\vdot\vb N=-b_s h_s,\qquad
b_s\coloneqq \frac{\xi_s\abs{(\grad\vb Y_s)^{-T}\vb N}}{\delta^2\kappa_s\circ\vb Y_s}>0.
\label{response flux}
\end{equation}
Thus, \eqref{response interior} and \eqref{response flux} imply, for every $\zeta\in H^1(\Omega_*)$,
\begin{equation}
\int_{\Omega_*}P_s\grad h_s\vdot\grad\zeta\dd{z}
+\int_{\pd\Omega_*}b_sh_s\zeta\dd{S}
=\int_{\Omega_*}G_s\vdot\grad\zeta\dd{z}
+\langle\dot\xi_s,\zeta\rangle.
\label{response weak}
\end{equation}
Here $(H^1)'(\Omega_*)$ means the dual of the full space $H^1(\Omega_*)$. 

Next, one may observe that the coefficient $b_s$ has positive upper and lower bounds on the stated family. Indeed,
\begin{equation*}
\kappa_s=\frac{\grad_{\vb n_s}q_s}{(I+\delta^2\grad^2q_s)^{-1}(\vb n_s,\vb n_s)},
\end{equation*}
so this follows from the static margins, density bounds and map bounds. Uniform ellipticity and the trace form of Poincaré's inequality yield
\begin{equation}
\label{eq:response-coercivity}
\norm{h}_{H^1(\Omega_*)}^2
\le C\qty(\norm{\grad h}_{L^2(\Omega_*)}^2+\norm{h}_{L^2(\pd\Omega_*)}^2)
\le C\qty(\int_{\Omega_*}P_s\grad h\vdot\grad h+\int_{\pd\Omega_*}b_sh^2).
\end{equation}
Testing \eqref{response weak} by $h_s$ and using $\norm{G_s}_{L^2}\le C\norm{\dot{\vb Y}_s}_{L^2}$ implies
\begin{equation}
\label{eq:response-h-bound}
\norm{h_s}_{H^1(\Omega_*)}
\le C\qty(\norm{\dot{\vb Y}_s}_{L^2(\Omega_*)}
+\norm{\dot\xi_s}_{(H^1)'(\Omega_*)}).
\end{equation}
The force identity \eqref{eq:force-derivative-identity} yields
\begin{equation}
\norm{\dv{s}\qty\big[(\grad q_s)\circ\vb Y_s]}_{L^2(\Omega_*)}
\le C\qty(\norm{\dot{\vb Y}_s}_{L^2(\Omega_*)}
+\norm{\dot\xi_s}_{(H^1)'(\Omega_*)}).
\label{force response}
\end{equation}

\subsubsection{An admissible comparison path}\label{sec:comparison-path}

For two solutions on a common short time interval, set
\begin{equation*}
\vb Y_t^i\coloneqq \vb X_t^i\circ\mathcal{Y}_{\Omega_0^i},\qquad
\xi_t^i\coloneqq (\rho_t^i\circ\vb Y_t^i)J_{\vb Y_t^i},\qquad i=1,2.
\end{equation*}
The continuity equation implies
\begin{equation*}
\xi_t^i=\xi_0^i
=(\rho_0^i\circ\mathcal Y_{\Omega_0^i})J_{\mathcal Y_{\Omega_0^i}}.
\end{equation*}
At each fixed time, both the maps and the reference mass densities are interpolated linearly:
\begin{equation}
\label{eq:affine-comparison-path}
\begin{split}
\vb Y_t^s&\coloneqq (2-s)\vb Y_t^1+(s-1)\vb Y_t^2, \quad
\xi_t^s\coloneqq (2-s)\xi_t^1+(s-1)\xi_t^2,\\
\hat\rho_t^s&\coloneqq \frac{\xi_t^s}{J_{\vb Y_t^s}},\qquad
\rho_t^s\coloneqq \hat\rho_t^s\circ(\vb Y_t^s)^{-1},
\qquad 1\le s\le2.
\end{split}
\end{equation}
Thus the endpoint domains and densities are unchanged, while
\begin{equation}
\label{eq:comparison-derivatives}
\pd_s\vb Y_t^s=\vb Y_t^2-\vb Y_t^1,\qquad
\pd_s\xi_t^s=\xi_0^2-\xi_0^1.
\end{equation}
The intermediate states are used only for the static comparison; they are not required to solve the evolution equations.

The path is admissible after the common initial neighbourhood and lifespan are reduced. For the density gap, one first observes that the determinant is smooth on bounded matrices, and its second-order Taylor remainder gives
\begin{equation*}
\sup_{1\le s\le2}
\norm{J_{\vb Y_t^s}-(2-s)J_{\vb Y_t^1}-(s-1)J_{\vb Y_t^2}}_{C^0}
\le C\norm{\vb Y_t^2-\vb Y_t^1}_{C^1}^2.
\end{equation*}
Since $J_{\vb Y_t^s}$ is uniformly bounded away from zero and $\rho_t^i\ge1+\sigma_0/2$, further reduction of the neighbourhood yields
\begin{equation}
\label{eq:chord-density-gap}
\hat\rho_t^s
\ge
\qty(1+\frac{\sigma_0}{2})
\frac{(2-s)J_{\vb Y_t^1}+(s-1)J_{\vb Y_t^2}}{J_{\vb Y_t^s}}
\ge1+\frac{\sigma_0}{4}.
\end{equation}
Theorem~\ref{thm:static} and the parameter argument in \S\ref{sec:parameter-dependence} therefore apply at every point of the segment. Denote the resulting potentials by $q_t^s$.

Integration of the force-response estimate \eqref{force response} along \eqref{eq:affine-comparison-path} now leads to
\begin{equation}
\label{eq:endpoint-force}
\begin{split}
&\norm{(\grad q_t^2)\circ\vb Y_t^2-(\grad q_t^1)\circ\vb Y_t^1}_{L^2(\Omega_*)}
\\
&\qquad\le
\int_1^2
\norm{\pd_s\qty[(\grad q_t^s)\circ\vb Y_t^s]}_{L^2(\Omega_*)}\dd{s}
\\
&\qquad\le
C\qty(
\norm{\vb Y_t^2-\vb Y_t^1}_{L^2(\Omega_*)}
+\norm{\xi_0^2-\xi_0^1}_{(H^1)'(\Omega_*)}
).
\end{split}
\end{equation}

For completeness, the initial reference mass difference can be estimated in the canonical initial coordinates. Here $\hat\rho_0^i\coloneqq \rho_0^i\circ\mathcal Y_{\Omega_0^i}$ and $\vb Y_0^i=\mathcal Y_{\Omega_0^i}$. The determinant identity gives
\begin{equation}
\label{eq:initial-mass-difference}
\xi_0^2-\xi_0^1
=(\hat\rho_0^2-\hat\rho_0^1)J_{\vb Y_0^2}+\hat\rho_0^1\operatorname{div}
\int_1^2
\qty[\operatorname{cof}(\grad\vb Y_0^s)]^T
(\vb Y_0^2-\vb Y_0^1)\dd{s}.
\end{equation}
To make the variable-density product explicit, denote the vector field under the divergence by $G_0$. Then
\begin{equation*}
\langle\hat\rho_0^1\operatorname{div}G_0,\zeta\rangle
=-\int_{\Omega_*}G_0\vdot
\qty\big(\hat\rho_0^1\grad\zeta+\zeta\grad\hat\rho_0^1)\dd{z}
+\int_{\pd\Omega_*}\hat\rho_0^1{\zeta}G_0\vdot\vb N \dd{\sigma}.
\end{equation*}
The trace theorem and $\vb Y_0^2-\vb Y_0^1=\phi_{\Omega_0^2}-\phi_{\Omega_0^1}$ on $\pd\Omega_*$ imply
\begin{equation}
\label{eq:initial-mass-bound}
\norm{\xi_0^2-\xi_0^1}_{(H^1)'(\Omega_*)}
\lesssim
\norm{\hat\rho_0^2-\hat\rho_0^1}_{L^2(\Omega_*)}
+\norm{\mathcal Y_{\Omega_0^2}-\mathcal Y_{\Omega_0^1}}_{L^2(\Omega_*)}
+\abs{\phi_{\Omega_0^2}-\phi_{\Omega_0^1}}_{H^{-1/2}(\pd\Omega_*)}.
\end{equation}

\subsubsection{The difference energy}\label{sec:difference-energy}

Set
\begin{equation}
\label{eq:difference-energy}
\mathcal E(t)\coloneqq 
\norm{\vb Y_t^1-\vb Y_t^2}_{L^2(\Omega_*)}^2
+\norm{\vb v_t^1\circ\vb Y_t^1-\vb v_t^2\circ\vb Y_t^2}_{L^2(\Omega_*)}^2.
\end{equation}
Since $\pd_t\vb Y_t^i=\vb v_t^i\circ\vb Y_t^i$ and
\begin{equation*}
\pd_t(\vb v_t^i\circ\vb Y_t^i)=(\grad q_t^i)\circ\vb Y_t^i,
\end{equation*}
one obtains
\begin{equation*}
\begin{split}
\frac12\dv{t}\mathcal E(t)
={}&\int_{\Omega_*}(\vb Y_t^1-\vb Y_t^2)\vdot
(\vb v_t^1\circ\vb Y_t^1-\vb v_t^2\circ\vb Y_t^2)\dd{z}
\\
&+\int_{\Omega_*}
(\vb v_t^1\circ\vb Y_t^1-\vb v_t^2\circ\vb Y_t^2)\vdot
\qty[(\grad q_t^1)\circ\vb Y_t^1-(\grad q_t^2)\circ\vb Y_t^2]\dd{z}
\\
\le{}&C\mathcal E(t)
+C\norm{\xi_0^1-\xi_0^2}_{(H^1)'(\Omega_*)}\sqrt{\mathcal E(t)}.
\end{split}
\end{equation*}
Applying Gronwall's inequality to $\sqrt{\mathcal E(t)+\epsilon}$ and then letting $\epsilon\downarrow0$ yields
\begin{equation}
\label{eq:gronwall-stability}
\mathcal E(t)
\le\exp(Ct)
\qty(
\sqrt{\mathcal E(0)}
+Ct\norm{\xi_0^1-\xi_0^2}_{(H^1)'(\Omega_*)}
)^2.
\end{equation}
For identical initial data, both terms on the right vanish. The maps and pulled-back velocities therefore coincide.

Finally, suppose that the initial data converge in the topologies stated in Theorem~\ref{thm:stability}, with uniform norms, a common density gap, and a common short lifespan. Then $\mathcal E(0)$ and $\norm{\xi_0^1-\xi_0^2}_{(H^1)'}$ tend to zero, so the maps and pulled-back velocities converge uniformly in time in $L^2(\Omega_*)$. Their uniform $C^{2,\alpha}$ bounds and interpolation give convergence in $C^{2,\beta}(\overline\Omega_*)$ for every $0<\beta<\alpha$.

The density difference is recovered without differentiating the evolution:
\begin{equation}
\label{eq:density-difference}
\rho_t^1\circ\vb Y_t^1-\rho_t^2\circ\vb Y_t^2
=
\frac{\xi_0^1-\xi_0^2}{J_{\vb Y_t^1}}
+\xi_0^2\qty(\frac1{J_{\vb Y_t^1}}-\frac1{J_{\vb Y_t^2}}).
\end{equation}
The initial mass densities converge in $C^{1,\alpha}$, and the Jacobians converge in $C^{1,\beta}$ while remaining uniformly positive. Hence the pulled-back densities converge uniformly in time in $C^{1,\beta}$. The endpoint force estimate \eqref{eq:endpoint-force} gives convergence of the forces in $L^2$, and their uniform $C^{2,\alpha}$ bounds give convergence in $C^{2,\beta}$ by the same interpolation. This proves Theorem~\ref{thm:stability}.

\section*{Acknowledgments}
Liu's research is supported by the star-up research grant ZX2026000395 at Ningbo University. Luo's research is supported by a grant from the Research Grants Council of the Hong Kong Special Administrative Region, China (Project No. 11310023) .

\fancyhead[RO,LE]{\scshape\sffamily References}
\bibliographystyle{amsplain0}
{\small\bibliography{ref}}
\end{document}